\newcommand{\rot}{\operatorname{rot}}
\newcommand{\ddiv}{\operatorname{div}}
\newcommand{\diam}{\operatorname{diam}}
\newcommand{\trace}{\operatorname{tr}}
\newcommand{\ns}{^{\textup{\tiny NS}}}
\newcommand{\ls}{^{\textup{\tiny LS}}}
\newtheorem{proposition}{Proposition}
\newtheorem{lemma}[proposition]{Lemma}
\newtheorem{assumption}{Assumption}
\theoremstyle{remark}
\newtheorem{remark}[proposition]{Remark}
\begin{document}
\title[Nondivergence PDEs]{Variational formulation and numerical analysis of linear elliptic
      equations in nondivergence form with Cordes coefficients}

\author[D.~Gallistl]{Dietmar Gallistl}
\address[D.~Gallistl]{Institut für Angewandte und Numerische Mathematik,
           Karlsruher Institut f\"ur Technologie,
           Englerstr.~2,
           D-76131 Karlsruhe, Germany.
         }
\email{gallistl(at)kit(dot)edu}
\date\today

\begin{abstract}
This paper studies formulations of
second-order elliptic partial differential
equations in nondivergence form on convex domains
as equivalent variational problems.
The first formulation is that of
Smears \& S\"uli [SIAM J.\ Numer.\ Anal.\ 51(2013), pp.\ 2088--2106.],
and
the second one is a new symmetric formulation based on a least-squares
functional.
These formulations enable the use of
standard finite element techniques for variational problems
in subspaces of $H^2$ as well as mixed finite element methods
from the context of fluid computations.
Besides the immediate quasi-optimal a~priori error bounds,
the variational setting allows for a~posteriori
error control with explicit constants
and adaptive mesh-refinement.
The convergence of an adaptive algorithm is proved.
Numerical results on uniform and adaptive meshes are included.
\end{abstract}

\keywords{finite element methods; Cordes coefficients;
  nondivergence form; fourth order; variational formulation;
  adaptive algorithm}

\subjclass{%
31B30,  
35J30,  
65N12,  
65N15,  
65N30
}

\maketitle

\section{Introduction}

Let $\Omega\subseteq\mathbb R^d$ be an open, bounded, convex
polytope for $d\in\{2,3\}$.
This article deals with the numerical approximation of strong
solutions $u\in H^1_0(\Omega)\cap H^2(\Omega)$ to the second-order
elliptic partial differential equation (PDE) 
\begin{equation}\label{e:pdeL}
L(u) = f \quad\text{in }\Omega\qquad u=0\quad\text{on }\partial\Omega
\end{equation}
where $f\in L^2(\Omega)$ is a given square-integrable function and
the operator $L$ has nondivergence form.
More precisely, it is given through
\begin{equation}\label{e:defL}
L(v):= A:D^2 v
:=
\sum_{j,k=1}^d A_{jk}\partial^2_{jk}v
 \qquad\text{for any }v\in V:=H^1_0(\Omega)\cap H^2(\Omega).
\end{equation}
In the case that the coefficient $A$ satisfies certain smoothness
assumptions, it is known that \eqref{e:pdeL} can be converted into a
second-order equation in divergence form through the product rule.
If $A$ is merely an essentially bounded tensor, such a reformulation
is not valid and variational formulations of \eqref{e:pdeL} are 
less obvious. 
It is proved in \cite{SmearsSueli2013} that
the unique solvability is assured through the Cordes
condition \cite{Cordes1956,MaugeriPalagachevSoftova2000}
described in \S\ref{s:formulation} below.
The first fully analyzed numerical scheme suited for $L^\infty$ 
Cordes coefficients
was suggested and analyzed in \cite{SmearsSueli2013} and belongs to
the class of discontinuous Galerkin methods.
It was successfully applied in
\cite{SmearsSueli2014,SmearsSueli2016} to fully-nonlinear
Hamilton--Jacobi--Bellman equations.
Further works on discontinuous Galerkin methods for nondivergence 
form problems
\cite{FengHenningsNeilan2016,FengNeilanSchnake2016}
focus on error
estimates in $W^{k,p}$ norms for the case of continuous coefficients.
Other approaches include the discrete Hessian method of
\cite{LakkisPryer2011} and the two-scale method of
\cite{NochettoZhang2014}. The latter work is based on the 
integro-differential approach of \cite{CaffarelliSilvestre2010}
and focusses on $L^\infty$ error estimates.

This paper 
studies variational formulations of \eqref{e:defL}
for the case of discontinuous coefficients
satisfying the Cordes condition.
The formulation seeks $u\in V$ such that
\begin{equation*}
 (A:D^2 u, \tau(\nabla v))_{L^2(\Omega)} = (f,\tau(\nabla v))_{L^2(\Omega)}
\quad\text{for all } v\in V,
\end{equation*}
where the operator 
$\tau:H^1(\Omega;\mathbb R^d)\to L^2(\Omega)$ 
acts on the test functions.
In this work, two possible options are discussed.
The choice $\tau=\tau\ns:=\gamma\ddiv\bullet$ (for the function $\gamma$
defined in \eqref{e:gammadef} below) leads to the
nonsymmetric formulation
of \cite{SmearsSueli2013}.
The second possibility is 
$\tau=\tau\ls:=A:D\bullet$ which results in a
symmetric problem that
turns out to be the Euler--Lagrange equation
for the minimization of the functional
$\| A:D^2 v - f\|_{L^2(\Omega)}^2$.
The superscripts $\mathrm{NS}$ and $\mathrm{LS}$ stand for
`non-symmetric' and `least-squares', respectively, owing to the 
properties of the individual method.
The variational formulations naturally allow the use
of $C^1$-conforming finite element
methods \cite{Ciarlet1978}.
 Since $C^1$ finite elements are sometimes considered
impractical, alternative discretization techniques are desirable.
We apply the recently proposed mixed formulation \cite{Gallistl2016} 
to the present problem. Its formulation involves function
spaces similar to those employed for the Stokes equations.
In the sense of the least-squares functional, the minimization
problem is restated as the minimization of
$\| A:D \phi - f\|_{L^2(\Omega)}^2$ over all vector-valued $H^1$ functions
with vanishing tangential trace subject to the constraint that
$\rot \phi = 0$.
While the continuous formulations are equivalent, the latter 
can be discretized with $H^1$-conforming finite elements
in the framework of saddle-point problems
\cite{BoffiBrezziFortin2013,Gallistl2016}.
In the discrete formulation, the structure of the differential operator
requires the incorporation of an additional stabilization term.
This is mainly due to the fact that $L(v)$ in the $L^2$ norm is bounded
from below by the norm of the Laplacian $\Delta v$ rather than the 
full Hessian tensor $D^2 v$. This is also the reason why the application
of nonconforming schemes is not as immediate as for the usual
biharmonic equation. Indeed, nonconforming finite element spaces
may contain piecewise harmonic functions and, thus, it is not
generally possible
to bound the piecewise Laplacian from below by the piecewise Hessian
unless further stabilization terms are included.
For example, the divergence theorem readily implies that
three out of the six local basis function of the Morley finite element
\cite{Ciarlet1978} are harmonic.
The conforming and mixed finite element formulations 
presented here lead to 
quasi-optimal a~priori error estimates
and give rise to natural
a~posteriori error estimates  based on strong $L^2$ volume residuals
where on any element of the finite element partition the
residual reads
$\|A:D^2 u_h - f\|_{L^2(T)}^2$
for the conforming finite element solution $u_h$ (with an 
analogous formula for the mixed discretization).
Since this residual equals $\|A:D^2 (u_h - u)\|_{L^2(T)}^2$,
it immediately leads to reliable and efficient estimates with
explicit constants (depending solely on the data).
This error estimator can be employed for guiding a
self-adaptive refinement procedure.
This work focusses on $h$-adaptivity and does not address
a local adaptation of the polynomial degree as 
in \cite{SmearsSueli2013}.
For the suggested class of discretizations, the convergence
of the adaptive algorithm can be proved.
Since the proof utilizes a somehow indirect argument
(similar to that of \cite{MorinSiebertVeeser2008}), no convergence
rate is obtained.
The performance of the adaptive mesh-refinement procedure
is numerically studied in the experiments of this paper.

\medskip
The remaining parts of this article are as follows: 
\S\ref{s:formulation} revisits the unique solvability results
of \cite{SmearsSueli2013} and presents the
variational formulations; \S\ref{s:FEM} presents the a~priori and
a~posteriori error estimates for finite element discretizations.
The convergence analysis of an adaptive algorithm follows
in \S\ref{s:afem}. 
Numerical experiments are presented in \S\ref{s:num}.
The remarks of \S\ref{s:conclusion} conclude the paper.

\medskip
Standard notation on function spaces applies throughout
this article.
Lebesgue and Sobolev functions with values in $\mathbb R^d$
are denoted by $L^2(\Omega;\mathbb R^d)$,
$H^1(\Omega;\mathbb R^d)$, etc.
The $d\times d$ identity matrix is denoted by $I_{d\times d}$.
The inner product of real-valued $d\times d$ matrices $A$, $B$
is denoted by $A:B = \sum_{j,k=1}^d A_{jk} B_{jk}$.
The Frobenius norm of a $d\times d$ matrix $A$ is denoted by
$\lvert A\rvert:=\sqrt{A:A}$;
the trace reads $\trace A$.
For vectors, $\lvert\cdot\rvert$ refers to the Euclidean
length.
The rotation 
(often referred to as $\operatorname{curl} v$ or $\nabla\wedge v$)
of a vector field $v$ is denoted by $\rot v$.
The union of a collection $\mathcal X$ of subsets of $\mathbb R^d$
is indicated by the symbol $\cup$ without index and reads
$
 \cup\mathcal X 
 := \{ x\in\mathbb R^d : x\in X\text{ for some } X\in\mathcal X \}.
$

\section{Problem setting and variational formulations}\label{s:formulation}
This section lists some conditions for the unique solvability 
of \eqref{e:pdeL} and proceeds with the variational formulations.
Throughout this article it is assumed that the coefficient
$A\in L^\infty(\Omega;\mathbb R^{d\times d})$
is uniformly elliptic, that is, there exist constants
$0<\alpha_1\leq\alpha_2<\infty$ such that
\begin{equation}\label{e:ellipticity}
\alpha_1
\leq
\inf_{\xi\in\mathbb R^d, |\xi|=1}
\xi^* A \xi
\leq
\sup_{\xi\in\mathbb R^d, |\xi|=1}
\xi^* A \xi
\leq
\alpha_2
\quad\text{almost everywhere in }\Omega.
\end{equation}
Assume furthermore that
there exists some $\varepsilon\in (0,1]$ such that
\begin{equation}\label{e:cordes}
\lvert A \rvert^2  \big/ (\trace A)^2
\leq
1\big/(d-1+\varepsilon)
\quad\text{almost everywhere in }\Omega.
\end{equation}
Assumption \eqref{e:cordes} is called the \emph{Cordes condition}
\cite{Cordes1956,MaugeriPalagachevSoftova2000,SmearsSueli2013}.
Define the function $\gamma$ by
\begin{equation}\label{e:gammadef}
\gamma := \trace(A) \big/ \lvert A\rvert^2 .
\end{equation}
While in the planar case, $d=2$, the Cordes condition is
implied by \eqref{e:ellipticity}, it is an essential condition for
$d\geq 3$ and its absence may lead to ill-posedness of the 
PDE \eqref{e:pdeL}
\cite{MaugeriPalagachevSoftova2000,Safonov1999}.
The uniform ellipticity \eqref{e:ellipticity} implies that
$\gamma$ is uniformly bounded from below by some positive
constant $\gamma_0$ \cite{SmearsSueli2013}.
The following result
can be found in 
\cite{MaugeriPalagachevSoftova2000,SmearsSueli2013}.

\begin{lemma}\label{l:sueli}
Let $A\in L^\infty(\Omega;\mathbb R^{d\times d})$
satisfy \eqref{e:ellipticity} and \eqref{e:cordes}.
Then, almost everywhere in $\Omega$, 
the following estimate
holds for any $B\in \mathbb R^{d\times d}$,
\begin{equation*}
|(\gamma A - I_{d\times d} ): B | 
=
|\gamma A : B - \trace B| 
\leq \sqrt{1-\varepsilon} \, |B| 
\end{equation*}
as well as
$
|\gamma A- I_{d\times d}|
\leq
\sqrt{1-\varepsilon}.
$
\end{lemma}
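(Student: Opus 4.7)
The plan is to first verify the stated equality, then reduce the inequality to computing the Frobenius norm of $\gamma A - I_{d\times d}$ via Cauchy--Schwarz, and finally perform that computation using the definition of $\gamma$ together with the Cordes condition.

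For the equality, since $\operatorname{tr} B = I_{d\times d}:B$ and the Frobenius inner product is bilinear, one has
\[
(\gamma A - I_{d\times d}):B = \gamma A:B - I_{d\times d}:B = \gamma A:B - \operatorname{tr} B,
\]
which gives the first identity pointwise a.e.\ in $\Omega$.

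For the inequality, the Cauchy--Schwarz inequality for the Frobenius inner product yields
\[
|(\gamma A - I_{d\times d}):B| \leq |\gamma A - I_{d\times d}|\,|B|,
\]
so both claims of the lemma follow at once from the pointwise bound $|\gamma A - I_{d\times d}|\leq \sqrt{1-\varepsilon}$. To establish the latter, I expand the squared Frobenius norm,
\[
|\gamma A - I_{d\times d}|^2 = \gamma^2 |A|^2 - 2\gamma\, I_{d\times d}:A + |I_{d\times d}|^2 = \gamma^2 |A|^2 - 2\gamma \operatorname{tr} A + d,
\]
and substitute the defining identity $\gamma = \operatorname{tr}(A)/|A|^2$ from \eqref{e:gammadef}, which gives $\gamma^2|A|^2 = (\operatorname{tr} A)^2/|A|^2$ and $2\gamma \operatorname{tr} A = 2(\operatorname{tr} A)^2/|A|^2$. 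After simplification this reduces to
\[
|\gamma A - I_{d\times d}|^2 = d - (\operatorname{tr} A)^2/|A|^2.
\]
The Cordes condition \eqref{e:cordes} is equivalent to $(\operatorname{tr} A)^2/|A|^2 \geq d-1+\varepsilon$ a.e., so the right-hand side is bounded above by $d - (d-1+\varepsilon) = 1-\varepsilon$, yielding the desired estimate.

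The proof is essentially algebraic and presents no real obstacle once one recognizes that the computation collapses thanks to the specific choice $\gamma = \operatorname{tr}(A)/|A|^2$; this choice is in fact optimal for minimizing $|\gamma A - I_{d\times d}|$ over scalar multipliers, which is the conceptual reason why $\gamma$ is defined as in \eqref{e:gammadef}. The only delicate point is to read the Cordes assumption in its reciprocal form $(\operatorname{tr} A)^2/|A|^2 \geq d-1+\varepsilon$, which is legitimate because uniform ellipticity \eqref{e:ellipticity} ensures $\operatorname{tr} A > 0$ and $|A|>0$ a.e.
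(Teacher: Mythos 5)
Your proof is correct and is essentially the argument contained in the cited source (Smears \& S\"uli, Lemma~1), which the paper invokes rather than reproduces: expand $|\gamma A - I_{d\times d}|^2 = \gamma^2|A|^2 - 2\gamma\operatorname{tr}A + d$, substitute the definition of $\gamma$ so that this collapses to $d - (\operatorname{tr}A)^2/|A|^2$, and invoke the Cordes condition in its reciprocal form, with Cauchy--Schwarz handling the matrix inner product estimate. Your remark that $\gamma = \operatorname{tr}(A)/|A|^2$ is the minimizer of $t\mapsto |tA - I_{d\times d}|$ is a nice conceptual addition but not needed for the proof.
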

\begin{proof}
See, e.g., the proof of \cite[Lemma~1]{SmearsSueli2013}.
\end{proof}

The triangle inequality shows that $A$
almost everywhere satisfies 
for any $B\in \mathbb R^{d\times d}$
\begin{equation}\label{e:ABpointwise}
\gamma |A:B|
  \geq 
  |\trace  B | -
  |(\gamma A - I_{d\times d} ): B | .
\end{equation}
The space of $H^1$ vector fields with vanishing tangential
trace reads
\begin{equation}\label{e:Wdef}
W:=\{v\in H^1(\Omega;\mathbb R^d) :
        \text{the tangential trace of } v \text{ on }
        \partial\Omega \text{ vanishes}
    \} .
\end{equation}
For the analysis of the formulations below, it
it is useful to note that, on convex domains,
the following estimate holds 
\cite[Thm.~2.3]{CostabelDauge1999}
\begin{equation}\label{e:divrotestimate}
\| D w \|_{L^2(\Omega)}^2
\leq
\| \rot w \|_{L^2(\Omega)}^2
+
\| \ddiv w \|_{L^2(\Omega)}^2 
\quad\text{for any }w\in W
\end{equation}
(on polytopes even with equality).
For any $w\in W$ with $\rot w=0$,
the combination of \eqref{e:ABpointwise}
for $B=Dw$ with Lemma~\ref{l:sueli} and \eqref{e:divrotestimate}
results in
\begin{equation}\label{e:ABlowerbound}
\|\gamma  A:Dw\|_{L^2(\Omega)} 
  \geq 
  (1-\sqrt{1-\varepsilon} ) \, \| \ddiv w \|_{L^2(\Omega)}
  \geq 
  (1-\sqrt{1-\varepsilon} ) \, \| D w \|_{L^2(\Omega)}.
\end{equation}
Similar calculations
(already carried out in \cite{SmearsSueli2013})
with Lemma~\ref{l:sueli} and \eqref{e:divrotestimate} prove 
for any $w\in W$ with $\rot w=0$ that
\begin{equation}\label{e:lowerboundSmearsSueli}
\begin{aligned}
 (A:D w, \gamma\ddiv w)_{L^2(\Omega)}
 &
 =
 \|\ddiv w\|_{L^2(\Omega)}^2
 +((\gamma A-I_{d\times d}) : Dw, \ddiv w)_{L^2(\Omega)}
 \\
 &
 \geq 
 (1-\sqrt{1-\varepsilon}) \|\ddiv w\|_{L^2(\Omega)}^2
\geq (1-\sqrt{1-\varepsilon}) \|D w\|_{L^2(\Omega)}^2 .
\end{aligned}
\end{equation}

We proceed with the description of the variational setting.
Define the space $V:=H^1_0(\Omega)\cap H^2(\Omega)$.
An application of \eqref{e:divrotestimate} shows that
the $L^2$ norm of the Hessian of
any $v\in V$ is controlled by the norm of the Laplacian
\cite{CostabelDauge1999,Grisvard1985,SmearsSueli2013}
\begin{equation}\label{e:LaplHess}
\| D^2 v \|_{L^2(\Omega)} \leq \| \Delta v \|_{L^2(\Omega)}
\quad\text{for any }v\in V.
\end{equation}
Define the operators $\tau\ns,\tau\ls:H^1(\Omega;\mathbb R^d)\to L^2(\Omega)$
by
\begin{equation*}
 \begin{aligned}
  \tau\ns(\phi):=\gamma\ddiv\phi  
  \quad\text{and}\quad
  \tau\ls(\phi):=A:D\phi
  \qquad\text{for any }\phi\in H^1(\Omega;\mathbb R^d).
 \end{aligned}
\end{equation*}
As mentioned in the introduction, each of these operators corresponds
to a specific choice of test functions in a variational formulation
and, thus, constitutes a class of numerical methods.
The variational problem seeks $u\in V$ such that
\begin{equation}\label{e:varform}
(A:D^2 u, \tau(\nabla v))_{L^2(\Omega)} = (f,\tau(\nabla v))_{L^2(\Omega)}
\quad\text{for all }v\in V
\end{equation}
for $\tau=\tau\ns$ (the nonsymmetric formulation of
\cite{SmearsSueli2013}) or $\tau=\tau\ls$ (the least-squares formulation
proposed here).
The lower bound \eqref{e:lowerboundSmearsSueli}
with $w=\nabla v$ implies that \eqref{e:varform} is coercive for
$\tau=\tau\ns$.
The lower bound \eqref{e:ABlowerbound}
with $w=\nabla v$ implies for
any $v\in V$ that
\begin{equation*}
\|\gamma\|_{L^\infty(\Omega)}^2
(A:D^2 v, \tau\ls(\nabla v))_{L^2(\Omega)}
\geq
\|\gamma A:D^2 v\|_{L^2(\Omega)}^2
\geq 
(1-\sqrt{1-\varepsilon} )^2
\| D^2 v\|_{L^2(\Omega)}^2.
\end{equation*}
Thus, $(A:D^2\bullet,A:D^2\bullet)_{L^2(\Omega)}$
 is an inner product on $V$ with
\begin{align}
\label{e:H2prodlowerbound}
\|A:D^2v\|_{L^2(\Omega)} 
&
\geq
c(\gamma,\varepsilon)
\| D^2 v\|_{L^2(\Omega)}
\quad\text{for any } v\in V
\\
\label{e:cdef}
\text{where}\qquad\qquad\qquad
 c(\gamma,\varepsilon) 
&:=
 (1-\sqrt{1-\varepsilon}) / \|\gamma\|_{L^\infty(\Omega)}.
\end{align}
This yields well-posedness of \eqref{e:varform}
for $\tau=\tau\ls$.
The following result proves the equivalence of
\eqref{e:pdeL} and \eqref{e:varform}.

\begin{proposition}
Let $\tau=\tau\ns$ or $\tau=\tau\ls$.
A function $u\in V$ solves \eqref{e:pdeL} strongly in $L^2(\Omega)$
if and only if it solves the variational form \eqref{e:varform}.
\end{proposition}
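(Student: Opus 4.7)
My plan is to prove the two implications separately. The implication \emph{strong $\Rightarrow$ variational} is immediate: if $u\in V$ satisfies $A:D^2 u=f$ pointwise almost everywhere, then $\tau(\nabla v)\in L^2(\Omega)$ for every $v\in V$ (both $\gamma$ and $A$ are essentially bounded and $\nabla v\in H^1(\Omega;\mathbb R^d)$), so multiplying by $\tau(\nabla v)$ and integrating over $\Omega$ produces \eqref{e:varform}.

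For the converse I would combine the classical Cordes existence theorem with uniqueness of the variational problem. The results recalled in \S\ref{s:formulation} (see \cite{MaugeriPalagachevSoftova2000,SmearsSueli2013}) provide a unique strong $L^2$-solution $\tilde u\in V$ of \eqref{e:pdeL}. By the forward implication already proved, $\tilde u$ satisfies \eqref{e:varform}, so once uniqueness for \eqref{e:varform} is shown one concludes $u=\tilde u$, whence $u$ solves \eqref{e:pdeL}. Uniqueness of the variational problem follows directly from the coercivity bounds already established: for $\tau=\tau\ns$, inserting $w=\nabla v$ into \eqref{e:lowerboundSmearsSueli} yields $(A:D^2 v,\tau\ns(\nabla v))_{L^2(\Omega)}\ge (1-\sqrt{1-\varepsilon})\|D^2 v\|_{L^2(\Omega)}^2$, while for $\tau=\tau\ls$, the bound \eqref{e:H2prodlowerbound} gives $(A:D^2 v,\tau\ls(\nabla v))_{L^2(\Omega)}\ge c(\gamma,\varepsilon)^2\|D^2 v\|_{L^2(\Omega)}^2$. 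Testing \eqref{e:varform} against $v=u_1-u_2$ for two candidate solutions $u_1,u_2$ therefore forces $D^2(u_1-u_2)=0$ in $L^2(\Omega)$; since $u_1-u_2\in H^1_0(\Omega)$, this gives $u_1=u_2$.

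The only conceptual difficulty is to keep the existence and uniqueness arguments strictly separate so as to avoid a circular proof: existence is the Cordes theorem, which is independent of the variational setting, whereas variational uniqueness rests exclusively on \eqref{e:lowerboundSmearsSueli} and \eqref{e:H2prodlowerbound}. A fully self-contained alternative is to show that $v\mapsto\tau(\nabla v):V\to L^2(\Omega)$ is surjective and deduce $A:D^2 u-f=0$ directly from $(A:D^2 u-f,\tau(\nabla v))_{L^2(\Omega)}=0$ holding for all $v\in V$; for $\tau\ns$ this reduces via $\gamma\ge\gamma_0>0$ to the $H^2$-regularity of the Dirichlet Laplacian on the convex polytope $\Omega$, while for $\tau\ls$ it is once again the Cordes existence statement.
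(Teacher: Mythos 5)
Your proof is correct and follows essentially the same route as the paper: existence of a strong solution from Cordes theory combined with uniqueness of the variational solution deduced from coercivity. The paper phrases the uniqueness argument for $\tau\ls$ through the Euler--Lagrange structure of the strictly convex least-squares functional \eqref{e:leastsquaresADuf} and simply cites \cite{SmearsSueli2013} for $\tau\ns$, whereas you carry out the coercivity/Lax--Milgram argument explicitly and uniformly for both choices of $\tau$, but the underlying mechanism is identical.
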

\begin{proof}
For the choice $\tau=\tau\ns$, the assertion was proved in
\cite[proof of Thm.~3]{SmearsSueli2013}, and it remains to consider the case
$\tau=\tau\ls$.
It is immediate that \eqref{e:pdeL} implies \eqref{e:varform}.
For the converse direction it is enough to note that
\eqref{e:varform} is the Euler--Lagrange equation of the 
convex minimization problem
\begin{equation}\label{e:leastsquaresADuf}
 u\in \operatorname*{arg\,min}_{v\in V}
   \| A:D^2 v - f\|_{L^2(\Omega)}^2 .
\end{equation}
Since \eqref{e:ellipticity} and \eqref{e:cordes} imply
that \eqref{e:pdeL} is uniquely solvable on convex domains
\cite{SmearsSueli2013},
this establishes the equivalence.
\end{proof}

Formulation \eqref{e:varform} is variational
and, thus, suited for approximation with the finite element
method (FEM). Standard finite elements will be discussed in
Subsection~\ref{ss:standardfem}.
Since the construction of $H^2$-conforming finite elements is
rather cumbersome, mixed formulations appear as an attractive
alternative.
To state the mixed formulation recently proposed in
\cite{Gallistl2016}, recall the definition of $W$
from \eqref{e:Wdef} and define the space
\begin{equation*}
Q:= \begin{cases}
       \{q\in L^2(\Omega) : 
        \int_\Omega q\,dx =0 \}& \text{if } d=2, \\
       \{q\in L^2(\Omega;\mathbb R^3) : 
        \ddiv q = 0 \text { in } \Omega
        \text{ and } q\cdot\nu|_{\partial\Omega}=0
        \text{ on }\partial\Omega\}& \text{if } d=3 .
       \end{cases}
\end{equation*}
Here $\nu$ denotes the outer unit normal of the domain $\Omega$.
Define the bilinear forms
$a_\tau:W\times W\to \mathbb R$ (for $\tau=\tau\ns$ or
$\tau=\tau\ls$) and
 $b:W\times Q \to \mathbb R$
by
\begin{equation*}
\begin{aligned}
a_\tau(v,z) & := (A:Dv, \tau(z))_{L^2(\Omega)} 
\quad
&&\text{for any } (v,z)\in W\times W,\\
b(v,q) &:= (\rot v, q)_{L^2(\Omega)}
&&\text{for any } (v,q)\in W\times Q.
\end{aligned}
\end{equation*}
The mixed formulation of \eqref{e:varform} is to seek
$(u,w,p)\in H^1_0(\Omega) \times W \times Q$ such that
\begin{subequations}\label{e:saddlepoint}
\begin{align}\label{e:saddlepoint_a}
(\nabla u - w,\nabla z)_{L^2(\Omega)} &=0
\quad&&\text{for all } z\in H^1_0(\Omega),\\
\label{e:saddlepoint_b}
a_\tau(w,v) + b(v,p)  &= (f,\tau(v))_{L^2(\Omega)}
&&\text{for all } v\in W,
\\
\label{e:saddlepoint_c}
b(w,q)    &= 0
&&\text{for all }q\in Q.
\end{align}
\end{subequations}

For the analysis of the well-posedness of
\eqref{e:saddlepoint_b}--\eqref{e:saddlepoint_c},
recall estimate
\eqref{e:lowerboundSmearsSueli} for $\tau=\tau\ns$
and 
\eqref{e:ABlowerbound} for $\tau=\tau\ls$,
which imply that the form 
$a_\tau$ is coercive on the subspace
of $W$ consisting of rotation-free vector fields,
namely, for all $v\in W$ with $\rot v=0$,
\begin{equation}\label{e:lowerboundAmixed}
\begin{aligned}
(1-\sqrt{1-\varepsilon}) \| Dv\|_{L^2(\Omega)}^2
&\leq 
a_{\tau\ns}(v,v)\leq \|A\|_{L^\infty(\Omega)} \|\gamma\|_{L^\infty(\Omega)}
 \| Dv\|_{L^2(\Omega)}^2,
\\
c(\gamma,\varepsilon)^2 \| Dv\|_{L^2(\Omega)}^2
&\leq 
a_{\tau\ls}(v,v)\leq \|A\|_{L^\infty(\Omega)}^2 \| Dv\|_{L^2(\Omega)}^2.
\end{aligned}
\end{equation}
Since there exists a constant $\beta>0$ such that the 
 following inf-sup condition is valid
\begin{equation}\label{e:infsup}
 \beta \leq  
  \inf_{q\in Q\setminus\{0\}} \sup_{v\in W\setminus\{0\}} 
  b(v,q) \big/ (\|Dv \|_{L^2(\Omega)}\|q \|_{L^2(\Omega)}),
\end{equation}
problem \eqref{e:saddlepoint_b}--\eqref{e:saddlepoint_c}
(and thus \eqref{e:saddlepoint}) is uniquely solvable
\cite{BoffiBrezziFortin2013}. 
The stability \eqref{e:infsup} (employed in \cite{Gallistl2016})
is based on a regularized decomposition given in \cite{LouMcIntosh2005}
which is stronger than the classical Helmholtz decomposition
\cite{GiraultRaviart1986}.

\begin{proposition}
Let $\tau=\tau\ns$ or $\tau=\tau\ls$.
Problems \eqref{e:varform} and \eqref{e:saddlepoint}
are equivalent in the following sense.
If $u\in V$ solves \eqref{e:varform}, then there exists
$p\in Q$ such that $(u,\nabla u,p)$ solves \eqref{e:saddlepoint}.
If, conversely, $(u,w,p)\in H^1_0(\Omega)\times W\times Q$
solves \eqref{e:saddlepoint}, then $u$ belongs to $V$ and
solves \eqref{e:varform} with $w = \nabla u$.
\end{proposition}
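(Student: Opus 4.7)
The plan is to prove the two implications separately, leveraging the inf-sup condition \eqref{e:infsup} and the fact that on the convex, hence simply connected, domain $\Omega$ the map $v\mapsto\nabla v$ is a bijection between $V$ and the rotation-free subspace of $W$.

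For the forward direction \eqref{e:varform}$\Rightarrow$\eqref{e:saddlepoint}, I would set $w:=\nabla u$. Then $w\in H^1(\Omega;\mathbb R^d)$ with vanishing tangential trace inherited from $u|_{\partial\Omega}=0$, hence $w\in W$; equation \eqref{e:saddlepoint_a} is trivial with this choice and \eqref{e:saddlepoint_c} follows from $\rot\nabla u=0$. The pressure $p$ is produced by considering the bounded linear functional $F(v):=(f,\tau(v))_{L^2(\Omega)}-a_\tau(w,v)$ on $W$ and showing that it annihilates $\ker b=\{v\in W:\rot v=0\}$. Indeed, for such $v$, simple connectedness supplies a scalar $\tilde v\in H^1(\Omega)$ with $v=\nabla\tilde v$; the vanishing tangential trace of $\nabla\tilde v$ forces $\tilde v$ to be constant on $\partial\Omega$, so after subtracting a constant $\tilde v\in H^1_0(\Omega)$, and $D\tilde v=v\in H^1$ upgrades $\tilde v$ to $V$. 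Substituting this $\tilde v$ in \eqref{e:varform} yields $F(v)=0$. Combining \eqref{e:infsup} with the closed range theorem then produces $p\in Q$ with $F(v)=b(v,p)$ for all $v\in W$, which is exactly \eqref{e:saddlepoint_b}.

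For the converse I would first extract $\rot w=0$ from \eqref{e:saddlepoint_c}. In the planar case, $(\rot w,q)_{L^2(\Omega)}=0$ for all mean-zero $q$ forces $\rot w$ to be constant, and $\int_\Omega\rot w\,dx=0$ follows by integration by parts together with the vanishing tangential trace of $w$. In three dimensions, $\rot w$ is divergence-free and satisfies $\rot w\cdot\nu=0$ on $\partial\Omega$ (a standard trace consequence of $w\times\nu|_{\partial\Omega}=0$), hence $\rot w\in Q$; the admissible choice $q:=\rot w$ in \eqref{e:saddlepoint_c} then gives $\|\rot w\|_{L^2(\Omega)}=0$. Simple connectedness now furnishes $\tilde u\in V$ with $w=\nabla\tilde u$ exactly as above, and testing $z:=u-\tilde u\in H^1_0(\Omega)$ in \eqref{e:saddlepoint_a} yields $\|\nabla(u-\tilde u)\|_{L^2(\Omega)}=0$, so $u=\tilde u\in V$ and $w=\nabla u$. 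For arbitrary $v\in V$, the choice $\phi:=\nabla v\in W$ in \eqref{e:saddlepoint_b} satisfies $b(\phi,p)=(\rot\nabla v,p)_{L^2(\Omega)}=0$, and \eqref{e:saddlepoint_b} collapses to \eqref{e:varform}.

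The step I expect to be most delicate is the identification $\rot w=0$ in three dimensions, because it relies on the nontrivial trace identity $\rot w\cdot\nu=0$ on $\partial\Omega$ for fields $w\in W$, which must be interpreted in the appropriate $H^{-1/2}$ sense. The construction of the pressure via the closed range theorem is conceptually routine but also needs care: one must verify that $F$ is a bounded linear form on $W$ and that the image of the $\rot$-operator matches the dual of the specific space $Q$ in which the inf-sup condition \eqref{e:infsup} is formulated.
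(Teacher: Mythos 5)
Your proposal follows essentially the same route as the paper's own proof sketch: in the forward direction set $w:=\nabla u$, note $\rot w=0$, and invoke the inf-sup condition \eqref{e:infsup} to produce the Lagrange multiplier $p$; in the converse direction deduce $\rot w=0$ from \eqref{e:saddlepoint_c}, recover a potential $\phi\in V$ with $\nabla\phi=w$ by simple connectedness, and use \eqref{e:saddlepoint_a} to identify $u=\phi$. The paper, referring to \cite{Gallistl2016}, asserts $\rot w=0$ and the existence of $p$ without elaboration; your write-up supplies the missing details — notably the dimension-dependent verification that $\ker b$ is exactly the rotation-free subspace of $W$ (in 2D, orthogonality to mean-zero functions plus $\int_\Omega\rot w\,dx=0$ from the vanishing tangential trace; in 3D, $\rot w\in Q$ via $\ddiv\rot w=0$ and the trace identity $\rot w\cdot\nu|_{\partial\Omega}=0$, then test with $q=\rot w$) and the closed-range construction of $p$ from a functional annihilating $\ker b$. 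Both steps are correct and appropriately flagged as the delicate points; your argument additionally uses the connectedness of $\partial\Omega$ (guaranteed by convexity) to conclude the scalar potential is constant on the boundary, which is implicit in the paper.
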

\begin{proof}
The proof is essentially contained in \cite{Gallistl2016}.
For completeness, it is sketched here.
It is easily verified that $w:=\nabla u$
is rotation-free, i.e., $\rot w= 0$. By \eqref{e:infsup} there exists
a Lagrange multiplier $p\in Q$ such that system
\eqref{e:saddlepoint} is satisfied.
Let, conversely, $w$ satisfy
\eqref{e:saddlepoint_b}--\eqref{e:saddlepoint_c}.
Since $\rot w=0$ and $\Omega$ is convex,
there exists a potential $\phi\in H^1_0(\Omega)$
with $\nabla\phi = w$.
By \eqref{e:saddlepoint_a}, the difference $u-\phi$ satisfies
the homogeneous Laplace equation with zero Dirichlet conditions,
hence $u=\phi$ and $w=\nabla u$.
\end{proof}

\begin{remark}
It is not difficult to see that the Lagrange multiplier $p$ equals
zero in the continuous setting. This property will not be preserved
by typical discretizations.
\end{remark}

\begin{remark}
In the case that the operator $L$ has the form \eqref{e:defL},
the system \eqref{e:saddlepoint}
decouples into a Stokes-type problem plus the post-processing
for the primal variable.
For more general equations involving zeroth-order terms,
see the comments in Section~\ref{s:conclusion}.
\end{remark}

\section{Finite element discretization}\label{s:FEM}

This section presents conforming and mixed finite element 
discretizations and their error analysis
for the problems of \S\ref{s:formulation}.

\subsection{Conforming discretization}\label{ss:standardfem}

The variational formulation \eqref{e:varform} immediately allows
stable discrete formulations with conforming finite elements.
Let $V_h\subseteq V$ be a closed subspace.
The discrete problem is to seek $u_h\in V_h$ such that
\begin{equation}\label{e:discrprobconf}
(A:D^2 u_h, \tau(\nabla v_h))_{L^2(\Omega)} 
= (f,\tau(\nabla v_h))_{L^2(\Omega)}
\quad\text{for all } v_h\in V_h,
\end{equation}
for $\tau=\tau\ns$ or $\tau=\tau\ls$.
Although the discrete solution $u_h$ depends on the choice of $\tau$,
this dependence will not be indicated by an additional
index on $u_h$. The same applies to the mixed scheme below.
The following result states the error analysis.
\begin{proposition}\label{p:confanalysis}
Problem \eqref{e:discrprobconf} is uniquely solvable.
The error $u-u_h$ for the solution $u\in V$ to \eqref{e:varform}
and the discrete solution $u_h\in V_h$ to \eqref{e:discrprobconf}
satisfies
\begin{equation*}
\| D^2 (u-u_h)\|_{L^2(\Omega)}
\leq 
c(\gamma,\varepsilon)^{-1}\|A\|_{L^\infty(\Omega)}
\inf_{v_h\in V_h} \| D^2 (u-v_h)\|_{L^2(\Omega)}.
\end{equation*}
Globally, the following reliable a~posteriori error estimate holds
\begin{equation*}
c(\gamma,\varepsilon) \|D^2(u-u_h)\|_{L^2(\Omega)}
\leq
\| A:D^2 u_h - f\|_{L^2(\Omega)}.
\end{equation*}
Furthermore, for any open subdomain $\omega\subseteq\Omega$,
the following local efficiency is valid
\begin{equation*}
\| A:D^2 u_h - f\|_{L^2(\omega)}
\leq 
\|A\|_{L^\infty(\omega)} \| D^2 (u-u_h)\|_{L^2(\omega)}.
\end{equation*}

\end{proposition}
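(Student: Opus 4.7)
The plan is to handle the three assertions in order: discrete well-posedness plus the a~priori estimate, then the reliable residual bound, then local efficiency. Throughout, the crucial ingredient is that $u\in V$ solves \eqref{e:pdeL} strongly, so that $A:D^2 u = f$ holds as an identity in $L^2(\Omega)$.

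For well-posedness of \eqref{e:discrprobconf}, I would observe that the bilinear form $b_\tau(u,v) := (A:D^2 u, \tau(\nabla v))_{L^2(\Omega)}$ is continuous on $V\times V$ with the bound $|b_\tau(u,v)| \leq \|A\|_{L^\infty(\Omega)} \|D^2 u\|_{L^2(\Omega)} \|\tau(\nabla v)\|_{L^2(\Omega)}$, and that $\|\tau(\nabla v)\|_{L^2(\Omega)}$ is controlled by $\|\gamma\|_{L^\infty(\Omega)}\|D^2 v\|_{L^2(\Omega)}$ (via \eqref{e:LaplHess}) for $\tau=\tau\ns$ and by $\|A\|_{L^\infty(\Omega)}\|D^2 v\|_{L^2(\Omega)}$ for $\tau=\tau\ls$. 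Coercivity on $V$ (and hence on the closed subspace $V_h$) is inherited from \eqref{e:lowerboundSmearsSueli} (with $w=\nabla v$) in the nonsymmetric case and from \eqref{e:H2prodlowerbound} in the least-squares case. Lax--Milgram then provides existence and uniqueness of $u_h\in V_h$.

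For the a~priori bound, I would proceed differently for the two choices of $\tau$. In the least-squares case $\tau=\tau\ls$ the problem is precisely the Galerkin approximation of the minimization \eqref{e:leastsquaresADuf}, so $\|A:D^2(u-u_h)\|_{L^2(\Omega)} \leq \inf_{v_h\in V_h}\|A:D^2(u-v_h)\|_{L^2(\Omega)}$ by best approximation. Using \eqref{e:H2prodlowerbound} on the left and $\|A:D^2(u-v_h)\|_{L^2(\Omega)}\leq \|A\|_{L^\infty(\Omega)}\|D^2(u-v_h)\|_{L^2(\Omega)}$ on the right yields the claimed constant $c(\gamma,\varepsilon)^{-1}\|A\|_{L^\infty(\Omega)}$. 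In the nonsymmetric case, C\'ea's lemma applies with coercivity constant $1-\sqrt{1-\varepsilon}$ and continuity constant $\|A\|_{L^\infty(\Omega)}\|\gamma\|_{L^\infty(\Omega)}$; their ratio equals $c(\gamma,\varepsilon)^{-1}\|A\|_{L^\infty(\Omega)}$ by \eqref{e:cdef}.

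The reliability bound is almost immediate once one writes $A:D^2 u_h - f = A:D^2(u_h-u)$ in $L^2(\Omega)$ and applies \eqref{e:H2prodlowerbound} to $v=u_h-u\in V$. The local efficiency follows from the same identity restricted to $\omega$ combined with the pointwise bound $|A:D^2(u_h-u)|\leq |A|\,|D^2(u_h-u)|$ and the essential supremum of $|A|$ over $\omega$. The main (very mild) obstacle is bookkeeping of constants to match exactly the stated factor $c(\gamma,\varepsilon)^{-1}\|A\|_{L^\infty(\Omega)}$; once the identity $A:D^2(u-u_h)=A:D^2 u_h - f$ is in hand, the rest is algebra.
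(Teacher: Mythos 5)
Your overall strategy matches the paper's own proof: Lax--Milgram for well-posedness, best approximation in the energy norm for the least-squares a~priori bound (this is what the paper means by ``equivalence of norms''), C\'ea's lemma for the nonsymmetric a~priori bound, and the $L^2$ identity $A:D^2 u = f$ for both reliability (with \eqref{e:H2prodlowerbound}) and local efficiency (with $|A:B|\leq |A|\,|B|$). All of that is correct.

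There is, however, one concrete error in your justification of the nonsymmetric continuity bound. You claim that $\|\tau\ns(\nabla v)\|_{L^2(\Omega)}=\|\gamma\Delta v\|_{L^2(\Omega)}$ is controlled by $\|\gamma\|_{L^\infty(\Omega)}\|D^2 v\|_{L^2(\Omega)}$ ``via \eqref{e:LaplHess}.'' But \eqref{e:LaplHess} states $\|D^2 v\|_{L^2(\Omega)}\leq\|\Delta v\|_{L^2(\Omega)}$ --- the reverse direction. It cannot be used to bound $\|\Delta v\|_{L^2}$ by $\|D^2 v\|_{L^2}$. What is available in that direction is the pointwise estimate $|\Delta v|=|I_{d\times d}:D^2 v|\leq\sqrt d\,|D^2 v|$, which gives $\|\Delta v\|_{L^2(\Omega)}\leq\sqrt d\,\|D^2 v\|_{L^2(\Omega)}$ and hence a continuity constant $\sqrt d\,\|A\|_{L^\infty(\Omega)}\|\gamma\|_{L^\infty(\Omega)}$ in the $\|D^2\cdot\|_{L^2}$-norm, not $\|A\|_{L^\infty(\Omega)}\|\gamma\|_{L^\infty(\Omega)}$ as you state. (The paper's own proof for $\tau\ns$ is terse on this point, but the ingredient you cite does not deliver the constant as written.) For the least-squares case your reasoning via best approximation and $\|A:D^2 v\|_{L^2}\leq\|A\|_{L^\infty}\|D^2 v\|_{L^2}$ is clean and correctly avoids this issue, and the reliability and efficiency arguments are exactly those of the paper.
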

\begin{proof}
For $\tau=\tau\ns$, the
a~priori result follows from combining C\'ea's Lemma
\cite{Braess2007} with the lower bound \eqref{e:lowerboundSmearsSueli}.
For $\tau=\tau\ls$, it follows with \eqref{e:H2prodlowerbound}
and similar arguments. Therein, the symmetry of the formulation 
allows the use of equivalence of norms which
implies the stated constant while a general C\'ea-type estimate would
result in the square of that constant.
The reliability result follows from the 
lower bound 
\eqref{e:lowerboundSmearsSueli} and \eqref{e:ABlowerbound}
for $\tau=\tau\ns$ and $\tau=\tau\ls$, respectively,
and the fact that $f=D^2u$ in the $L^2$ sense.
The latter fact also proves the efficiency estimate.
\end{proof}

Several instances of finite-dimensional piecewise polynomial
conforming subspaces
$V_h$ are known \cite{Ciarlet1978}. 
In the numerical experiments of Section~\ref{s:num}, the 
performance of the Bogner--Fox--Schmit (BFS) finite element under 
adaptive mesh-refinement based on the a~posteriori error estimator
of Proposition~\eqref{p:confanalysis}
is empirically studied.

\subsection{Mixed discretization}\label{ss:mixedFEM}
The conformity assumption
$V_h\subseteq H^2(\Omega)$ requires $C^1$ continuity
and results in rather complicated local constructions.
An alternative discretization is based on the formulation
\eqref{e:saddlepoint} and mixed Stokes-type finite elements
\cite{Gallistl2016}.
Suppose that $W_h\subseteq W$ and $Q_h\subseteq Q$ are closed
subspaces that satisfy for some positive constant
$\tilde\beta$ that
\begin{equation}\label{e:infsup_d}
 \tilde\beta\leq
  \inf_{q_h\in Q_h\setminus\{0\}} \sup_{v_h\in W_h\setminus\{0\}} 
 b(v_h,q_h)\big/(\|Dv_h \|_{L^2(\Omega)}\|q_h \|_{L^2(\Omega)}).
\end{equation}
Since, in general, the property $b(v_h,q_h)=0$ for all $q_h\in Q_h$
does not imply that $\rot v_h =0$,
the argument \eqref{e:divrotestimate} is not applicable and
coercivity of $a$ on the kernel of $b$ requires
stabilization.
The proposed stabilization is as follows.
Define the constant

\begin{equation*}
c_\lambda^\tau(\gamma,\varepsilon)
:=
\begin{cases}
\sqrt{1-\frac{\lambda^2+1-\varepsilon}{2\lambda} }
&\text{if }\tau=\tau\ns,
\\
\frac{1-\sqrt{1-\varepsilon}}
     {\|\gamma\|_{L^\infty(\Omega)}\sqrt{1+\lambda}}
= c(\gamma,\varepsilon)/\sqrt{1+\lambda}
&\text{if }\tau=\tau\ls
\end{cases}
\end{equation*}
for a given parameter $\lambda>0$ which in the case 
$\tau=\tau\ns$ is subject to the additional constraint
$|\lambda-1|<\sqrt{\varepsilon}$.
It should also be noted that in the case $\tau=\tau\ns$
the constant $c_\lambda^\tau(\gamma,\varepsilon)$ is
independent of $\gamma$. The notation, however,
is maintained for the sake of a unified presentation.
Define furthermore the stabilization parameter
$$
\sigma_\lambda^\tau(\gamma,\varepsilon)
:=
\begin{cases}
\sqrt{c_\lambda^\tau(\gamma,\varepsilon)^2 
+(1-\varepsilon)/(2\lambda)}
= \sqrt{1-\lambda/2}
&\text{if } \tau=\tau\ns,
\\
\sqrt{%
c_\lambda^\tau(\gamma,\varepsilon)^2 
+ \frac{(1+1/\lambda)(1-\varepsilon)}
       {(1+\lambda) \|\gamma\|_{L^\infty(\Omega)}^2}
}
&\text{if } \tau=\tau\ls.
\end{cases}
$$
Define the enriched bilinear form
$\tilde a_\tau$ for any $v,z\in W$ through
\begin{equation*}
\tilde a_\tau(v,z) 
:= 
a_\tau(v,z) 
 + 
\sigma_\lambda^\tau(\gamma,\varepsilon)^2
(\rot v,\rot z )_{L^2(\Omega)}.
\end{equation*}

Let $S_h\subseteq H^1_0(\Omega)$ be a closed subspace.
The discrete mixed system seeks 
$(u_h,w_h,p_h)\in S_h \times W_h \times Q_h$ such that
\begin{subequations}\label{e:discrsaddlepoint}
\begin{align}\label{e:discrsaddlepoint_a}
(\nabla u_h - w_h,\nabla z_h)_{L^2(\Omega)} &=0
\quad&&\text{for all } z_h\in S_h,\\
\label{e:discrsaddlepoint_b}
 \tilde a_\tau(w_h,v_h)
+ b(v_h,p_h)  &= 
(f,\tau(v_h))_{L^2(\Omega)}
&&\text{for all } v_h\in W_h,
\\
\label{e:discrsaddlepoint_c}
b(w_h,q_h)    &= 0
&&\text{for all }q_h\in Q_h.
\end{align}
\end{subequations}

The following proposition states well-posedness and 
error estimates for \eqref{e:discrsaddlepoint} 
in the case $\tau=\tau\ns$.

\begin{proposition}\label{p:mixedanalysisNS}
Let $\tau=\tau\ns$.
For any $\lambda>0$ such that 
$|\lambda-1|\leq\sqrt{\varepsilon}$,
problem~\eqref{e:discrsaddlepoint} admits a unique solution
$(u_h,w_h,p_h)\in S_h \times W_h \times Q_h$.
It satisfies the error estimate
\begin{equation*}
\|D^2u-D w_h\|_{L^2(\Omega)}
\leq
C(\lambda,\gamma,\varepsilon,\tau)
\inf_{v_h\in W_h}
\|D^2u-D v_h\|_{L^2(\Omega)}
\end{equation*}
where
$
C(\lambda,\gamma,\varepsilon,\tau)
=
4c_\lambda^\tau(\gamma,\varepsilon)^{-2}\tilde\beta^{-1}
(\|\gamma\|_{L^\infty(\Omega)} \|A\|_{L^\infty(\Omega)}
  + \sigma_\lambda^\tau(\gamma,\varepsilon)^2) .
$
Moreover the following reliable a~posteriori error estimate holds
for any $\mu>0$
with 
$\mu\|\gamma\|_{L^\infty(\Omega)}^2
 \leq 2c_\lambda^\tau(\gamma,\varepsilon)^2$,
\begin{equation*}
\begin{aligned}
&
\sqrt{c_\lambda^\tau(\gamma,\varepsilon)^2 - 2^{-1}\mu\|\gamma\|_{L^\infty(\Omega)}^2}
 \|D^2 u -Dw_h \|_{L^2(\Omega)}
\\
&\qquad\qquad
\leq 
\sqrt{(2\mu)^{-1}\| A:D w_h -f \|_{L^2(\Omega)}^2
+
\sigma_\lambda^\tau(\gamma,\varepsilon)^2
 \|\rot w_h \|_{L^2(\Omega)}^2}
.
\end{aligned}
\end{equation*}
For any open subdomain $\omega\subseteq \Omega$ we have the efficiency
\begin{equation*}
\begin{aligned}
&
\sqrt{(2\mu)^{-1}
\| A:D w_h -f \|_{L^2(\omega)}^2
+\sigma_\lambda^\tau(\gamma,\varepsilon)^2 \|\rot w_h \|_{L^2(\omega)}^2}
\\
&\qquad\qquad\qquad\qquad
\leq 
\sqrt{(2\mu)^{-1}\|A\|_{L^\infty(\omega)}^2 
+ \sigma_\lambda^\tau(\gamma,\varepsilon)^2}
\|D^2 u -Dw_h \|_{L^2(\omega)}.
\end{aligned}
\end{equation*}
\end{proposition}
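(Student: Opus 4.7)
The plan is to prove the four assertions (well-posedness, quasi-optimality, reliability, efficiency) in that order by extending the computations behind \eqref{e:lowerboundSmearsSueli} beyond the rotation-free subspace and combining them with the Brezzi framework.

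For well-posedness, I would first verify the coercivity of $\tilde a_{\tau\ns}$ on all of $W$ (and hence on the discrete kernel), where the enrichment $\sigma^2(\rot \cdot,\rot\cdot)_{L^2}$ replaces the rot-free hypothesis. The computation in \eqref{e:lowerboundSmearsSueli} extends to any $v\in W$ as
\begin{equation*}
 a_{\tau\ns}(v,v)=\|\ddiv v\|_{L^2(\Omega)}^2 + ((\gamma A-I_{d\times d}){:}Dv,\ddiv v)_{L^2(\Omega)},
\end{equation*}
so Lemma~\ref{l:sueli} and Young's inequality with parameter $\lambda$ yield $a_{\tau\ns}(v,v)\geq (1-\lambda/2)\|\ddiv v\|_{L^2(\Omega)}^2 - \frac{1-\varepsilon}{2\lambda}\|Dv\|_{L^2(\Omega)}^2$. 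Inserting \eqref{e:divrotestimate} in the form $\|\ddiv v\|_{L^2(\Omega)}^2\geq\|Dv\|_{L^2(\Omega)}^2-\|\rot v\|_{L^2(\Omega)}^2$ produces
$\tilde a_{\tau\ns}(v,v)\geq c_\lambda^{\tau\ns}(\gamma,\varepsilon)^2\|Dv\|_{L^2(\Omega)}^2$, positive exactly under $(\lambda-1)^2<\varepsilon$, which is where the definitions of $c_\lambda^{\tau\ns}$ and $\sigma_\lambda^{\tau\ns}$ come from. Combined with the assumed discrete inf-sup \eqref{e:infsup_d} and the continuity bound $|\tilde a_{\tau\ns}(v,w)|\leq(\|\gamma\|_{L^\infty(\Omega)}\|A\|_{L^\infty(\Omega)}+\sigma_\lambda^{\tau\ns}(\gamma,\varepsilon)^2)\|Dv\|_{L^2(\Omega)}\|Dw\|_{L^2(\Omega)}$ (using $\|\rot v\|\leq\|Dv\|$ from \eqref{e:divrotestimate} on polytopes), Brezzi's theorem delivers existence, uniqueness, and the quasi-optimal error bound with constant $4 c_\lambda^{-2}\tilde\beta^{-1}(\|\gamma\|_{L^\infty(\Omega)}\|A\|_{L^\infty(\Omega)}+\sigma_\lambda^{\tau\ns\,2})$. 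The essential consistency ingredient is that the continuous $w=\nabla u$ is rot-free, so $\tilde a_{\tau\ns}(w,v_h)=a_{\tau\ns}(w,v_h)$ and the usual discrete Galerkin identities hold.

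For the reliability bound, set $e:=w-w_h$ and use the two crucial identities $A{:}De = A{:}Dw_h-f$ (because $A{:}Dw=A{:}D^2 u = f$) and $\rot e = -\rot w_h$ (because $\rot w=0$). Starting from $c_\lambda^{\tau\ns\,2}\|De\|_{L^2(\Omega)}^2\leq\tilde a_{\tau\ns}(e,e)=a_{\tau\ns}(e,e)+\sigma_\lambda^{\tau\ns\,2}\|\rot e\|_{L^2(\Omega)}^2$, I would bound
\begin{equation*}
a_{\tau\ns}(e,e)=(A{:}De,\gamma\,\ddiv e)_{L^2(\Omega)}\leq \|\gamma\|_{L^\infty(\Omega)}\|A{:}De\|_{L^2(\Omega)}\|De\|_{L^2(\Omega)}
\end{equation*}
by Cauchy--Schwarz (together with $\|\ddiv e\|\leq\|De\|$), and then apply Young's inequality with parameter $\mu$ as $ab\leq(2\mu)^{-1}a^2+(\mu/2)\|\gamma\|_{L^\infty}^2 b^2$ to extract $(2\mu)^{-1}\|A{:}De\|_{L^2(\Omega)}^2 + (\mu/2)\|\gamma\|_{L^\infty(\Omega)}^2\|De\|_{L^2(\Omega)}^2$. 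Absorbing the $(\mu/2)\|\gamma\|_{L^\infty}^2\|De\|^2$ term on the left gives exactly the stated inequality, which is where the admissibility condition $\mu\|\gamma\|_{L^\infty}^2\leq 2 c_\lambda^{\tau\ns\,2}$ appears.

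The local efficiency bound is the easy half: $\|A{:}Dw_h-f\|_{L^2(\omega)}=\|A{:}De\|_{L^2(\omega)}\leq\|A\|_{L^\infty(\omega)}\|De\|_{L^2(\omega)}$ pointwise, and $\|\rot w_h\|_{L^2(\omega)}=\|\rot e\|_{L^2(\omega)}\leq\|De\|_{L^2(\omega)}$ componentwise, so combining the two weighted contributions with weights $(2\mu)^{-1}$ and $\sigma_\lambda^{\tau\ns\,2}$ yields the claim. I expect the main obstacle to be the reliability part: finding the precise Cauchy--Schwarz/Young combination whose coefficients align with the (slightly unusual) form of $\sigma_\lambda^{\tau\ns}$ and $c_\lambda^{\tau\ns}$, so that the admissibility range in $\mu$ comes out clean; the well-posedness and efficiency steps are largely bookkeeping on top of the ideas already encapsulated in \eqref{e:lowerboundSmearsSueli} and \eqref{e:divrotestimate}.
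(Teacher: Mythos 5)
Your proof follows the same route as the paper's: coercivity of $\tilde a_{\tau\ns}$ on all of $W$ via Lemma~\ref{l:sueli}, Young with parameter $\lambda$, and \eqref{e:divrotestimate}; then the abstract mixed theory (with $p=0$, $\rot w=0$) for well-posedness and the a~priori bound; then the $L^2$ identity $A{:}D^2u=f$ plus Cauchy--Young with parameter $\mu$ for reliability, and the same identity for efficiency. Your coercivity derivation reorders the steps slightly — you apply the pointwise bound $|(\gamma A - I){:}Dv|\leq\sqrt{1-\varepsilon}\,|Dv|$ first and then trade $\|\ddiv v\|^2\geq\|Dv\|^2-\|\rot v\|^2$, whereas the paper applies Young directly to $\sqrt{1-\varepsilon}\,\|\ddiv v\|\sqrt{\|\ddiv v\|^2+\|\rot v\|^2}$ — but this yields the identical constant $c_\lambda^\tau(\gamma,\varepsilon)^2$, and the remaining steps are in lockstep with the published argument, so this is essentially the same proof.

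One small caution, which applies to the paper's terse efficiency proof as much as to yours: the claim $\|\rot e\|_{L^2(\omega)}\leq\|De\|_{L^2(\omega)}$ on an arbitrary open $\omega\subseteq\Omega$ is not a pointwise inequality (in two dimensions $|\rot v|^2\leq 2|Dv|^2$ is sharp), and \eqref{e:divrotestimate} only holds on the full domain $\Omega$ for fields in $W$. If you want the local efficiency constant as written, you need to justify this step rather than describe it as "componentwise"; otherwise a factor $\sqrt2$ (or the ambient-dimension constant) should be tracked.
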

\begin{proof}
Let $v\in W$.
The argument from the first line of \eqref{e:lowerboundSmearsSueli}
together with Lemma~\ref{l:sueli} and \eqref{e:divrotestimate}
leads to
\begin{equation*}
\begin{aligned}
 &
 (A:D v, \gamma\ddiv v)_{L^2(\Omega)}
\\
 &\quad
 =
 \|\ddiv v\|_{L^2(\Omega)}^2
 +((\gamma A-I_{d\times d}) : Dv, \ddiv v)_{L^2(\Omega)}
 \\
 &\quad
 \geq 
 \|\ddiv v\|_{L^2(\Omega)}^2
 -\sqrt{1-\varepsilon}\|\ddiv v\|_{L^2(\Omega)}
    \sqrt{\|\ddiv v\|_{L^2(\Omega)}^2+\|\rot v\|_{L^2(\Omega)}^2}.
\end{aligned}
\end{equation*}
For any $\lambda>0$,
the Young inequality bounds the right-hand side from below by
\begin{equation*}
 \big(1-\lambda/2-(1-\varepsilon)/(2\lambda)\big)
  \|\ddiv v\|_{L^2(\Omega)}^2
 -(1-\varepsilon)/(2\lambda)\|\rot v\|_{L^2(\Omega)}^2.
\end{equation*}
Elementary calculations with the foregoing two displayed expressions
therefore lead,
after adding 
$c_\lambda^\tau(\gamma,\varepsilon)^2\|\rot v\|_{L^2(\Omega)}^2$,
 to the coercivity
\begin{equation}\label{e:coercivityNSmixed}
 c_\lambda^\tau(\gamma,\varepsilon)^2 \|Dv\|_{L^2(\Omega)}^2
\leq
\tilde{a}_\tau(v,v)
\quad\text{for any } v\in W.
\end{equation}
The constant  $c_\lambda^\tau(\gamma,\varepsilon)^2$
is positive if and only if $|\lambda-1|<\sqrt{\varepsilon}$.
This and \eqref{e:infsup_d} yield well-posedness. The a~priori
error estimate follows from the mixed finite element theory
\cite{BoffiBrezziFortin2013}, see
\cite[Thm.~5.2.2]{BoffiBrezziFortin2013} for the precise constant.
In particular, the best-approximation error of $p$ does not appear
in the error bound because that term equals zero due to $p=0$.
The proof of the reliability estimate employs the coercivity
\eqref{e:coercivityNSmixed},
the $L^2$ identity $A:D^2u=f$,
as well as the Cauchy and Young inequalities
with an arbitrary parameter $\mu>0$,
\begin{equation*}
\begin{aligned}
&
   c_\lambda^\tau(\gamma,\varepsilon)^2 \|D(\nabla u- w_h)\|_{L^2(\Omega)}^2
\leq
\tilde{a}_\tau(\nabla u- w_h,\nabla u- w_h)
\\
&
\qquad
\leq 
(2\mu)^{-1}
 \|A:Dw_h - f\|_{L^2(\Omega)}^2
+ 2^{-1}\mu \|\gamma\|_{L^\infty(\Omega)}^2 \|D(\nabla u- w_h)\|_{L^2(\Omega)}^2
\\
&
\qquad\qquad
+
 \sigma_\lambda^\tau(\gamma,\varepsilon)^2 \|\rot w_h\|_{L^2(\Omega)}^2.
\end{aligned}
\end{equation*}
This implies the stated reliability.
The efficiency follows from $A:D^2u=f$ in $L^2$.
\end{proof}

The following proposition states well-posedness and 
error estimates for \eqref{e:discrsaddlepoint} 
in the case $\tau=\tau\ls$.

\begin{proposition}\label{p:mixedanalysisLS}
Let $\tau=\tau\ls$.
For any $\lambda>0$,
problem~\eqref{e:discrsaddlepoint} admits a unique solution
$(u_h,w_h,p_h)\in S_h \times W_h \times Q_h$.
It satisfies the error estimate
\begin{equation*}
\|D^2u-D w_h\|_{L^2(\Omega)}
\leq
C(\lambda,\gamma,\varepsilon,\tau)
\inf_{v_h\in W_h}
\|D^2u-D v_h\|_{L^2(\Omega)}
\end{equation*}
where
\begin{equation*}
C(\lambda,\gamma,\varepsilon,\tau)
=
2 
\frac{
\sqrt{\|A\|_{L^\infty(\Omega)}^2+\sigma_\lambda^\tau(\gamma,\varepsilon)^2}
}
{c_\lambda^\tau(\gamma,\varepsilon)}
\left[\tilde\beta^{-1} + 
\frac{
\sqrt{\|A\|_{L^\infty(\Omega)}^2+\sigma_\lambda^\tau(\gamma,\varepsilon)^2}
}
{c_\lambda^\tau(\gamma,\varepsilon)}
\right].
\end{equation*}
Moreover the following reliable a~posteriori error estimate holds
\begin{equation*}
c_\lambda^\tau(\gamma,\varepsilon) \|D^2 u -Dw_h \|_{L^2(\Omega)}
\leq 
\sqrt{\| A:D w_h -f \|_{L^2(\Omega)}^2
+
\sigma_\lambda^\tau(\gamma,\varepsilon)^2
\|\rot w_h \|_{L^2(\Omega)}^2}
.
\end{equation*}
For any open subdomain $\omega\subseteq \Omega$ we have the efficiency
\begin{equation*}
\begin{aligned}
&
\sqrt{
\| A:D w_h -f \|_{L^2(\omega)}^2
+\sigma_\lambda^\tau(\gamma,\varepsilon)^2 \|\rot w_h \|_{L^2(\omega)}^2
}
\\
&\qquad\qquad\qquad\qquad
\leq 
\sqrt{\|A\|_{L^\infty(\omega)}^2 
+ \sigma_\lambda^\tau(\gamma,\varepsilon)^2}
\|D^2 u -Dw_h \|_{L^2(\omega)}.
\end{aligned}
\end{equation*}
\end{proposition}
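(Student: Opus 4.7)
The plan is to mirror the structure of the proof of Proposition~\ref{p:mixedanalysisNS}: first establish coercivity of $\tilde{a}_{\tau}$ on all of $W$ with constant $c_\lambda^{\tau}(\gamma,\varepsilon)^2$, then invoke Brezzi's theorem together with the inf-sup condition \eqref{e:infsup_d} to obtain well-posedness and the a~priori estimate, and finally derive the a~posteriori bounds by plugging the error $\nabla u - w_h \in W$ into the coercivity inequality.

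The heart of the argument is the coercivity estimate
\[
c_\lambda^{\tau}(\gamma,\varepsilon)^2 \|Dv\|_{L^2(\Omega)}^2 \leq \tilde{a}_{\tau}(v,v)
\qquad\text{for every } v\in W.
\]
To prove it, I start from the decomposition $\ddiv v = \gamma A:Dv - (\gamma A - I_{d\times d}):Dv$ and combine the triangle inequality with the pointwise estimate of Lemma~\ref{l:sueli} to obtain $|\ddiv v| \leq \gamma|A:Dv| + \sqrt{1-\varepsilon}|Dv|$ almost everywhere. Taking $L^2$ norms and pulling $\|\gamma\|_{L^\infty(\Omega)}$ out of the first summand gives the estimate $\|\ddiv v\|_{L^2(\Omega)} \leq \|\gamma\|_{L^\infty(\Omega)}\|A:Dv\|_{L^2(\Omega)} + \sqrt{1-\varepsilon}\|Dv\|_{L^2(\Omega)}$. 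The crucial next step is to eliminate the full $\|Dv\|$ from the right-hand side by invoking \eqref{e:divrotestimate}, which (via $\sqrt{a+b}\leq\sqrt{a}+\sqrt{b}$) gives $\|Dv\|_{L^2(\Omega)} \leq \|\rot v\|_{L^2(\Omega)} + \|\ddiv v\|_{L^2(\Omega)}$. Substituting and absorbing the resulting $\sqrt{1-\varepsilon}\|\ddiv v\|$ term to the left, which succeeds because $1-\sqrt{1-\varepsilon}>0$ by the Cordes condition, isolates
\[
(1-\sqrt{1-\varepsilon})\|\ddiv v\|_{L^2(\Omega)} \leq \|\gamma\|_{L^\infty(\Omega)}\|A:Dv\|_{L^2(\Omega)} + \sqrt{1-\varepsilon}\|\rot v\|_{L^2(\Omega)}.
\]
Squaring this inequality through the Young inequality $(a+b)^2 \leq (1+\lambda)a^2+(1+1/\lambda)b^2$ produces precisely the combinations $(1+\lambda)\|\gamma\|_{L^\infty(\Omega)}^2$ and $(1+1/\lambda)(1-\varepsilon)$ that appear in $c_\lambda^{\tau}$ and $\sigma_\lambda^{\tau}$. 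Adding $(1-\sqrt{1-\varepsilon})^2\|\rot v\|_{L^2(\Omega)}^2$ to both sides and applying \eqref{e:divrotestimate} once more to bound $\|\ddiv v\|_{L^2(\Omega)}^2+\|\rot v\|_{L^2(\Omega)}^2$ from below by $\|Dv\|_{L^2(\Omega)}^2$, followed by division by $(1+\lambda)\|\gamma\|_{L^\infty(\Omega)}^2$, delivers the claim.

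Once coercivity is in place, the remainder follows the pattern of Proposition~\ref{p:mixedanalysisNS}. Continuity of $a_{\tau}$ with constant $\|A\|_{L^\infty(\Omega)}^2$ combined with the rotational stabilization yields the overall continuity bound $\sqrt{\|A\|_{L^\infty(\Omega)}^2+\sigma_\lambda^{\tau}(\gamma,\varepsilon)^2}$ for $\tilde{a}_{\tau}$, and \cite[Thm.~5.2.2]{BoffiBrezziFortin2013} together with \eqref{e:infsup_d} then delivers well-posedness and the stated a~priori constant; the pressure best-approximation term in Brezzi's formula vanishes because $p=0$ in the continuous setting. For reliability I plan to apply the coercivity directly to $e:=\nabla u - w_h \in W$, using $\rot\nabla u = 0$ and the $L^2$ identity $A:D^2 u = f$ to rewrite $\tilde{a}_{\tau}(e,e)$ as $\|A:Dw_h - f\|_{L^2(\Omega)}^2 + \sigma_\lambda^{\tau}(\gamma,\varepsilon)^2\|\rot w_h\|_{L^2(\Omega)}^2$. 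The local efficiency on $\omega\subseteq\Omega$ then follows at once from the same two identities together with the pointwise bounds $|A:D(\cdot)|\leq\|A\|_{L^\infty(\omega)}|D(\cdot)|$ and $|\rot(\cdot)|\leq|D(\cdot)|$.

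The principal technical hurdle is the coercivity step, specifically the passage from an $L^2$ inequality involving $\|Dv\|$ on the right-hand side to one in which only $\|\rot v\|$ remains. That manoeuvre succeeds precisely because $1-\sqrt{1-\varepsilon}>0$, which is exactly what the Cordes condition guarantees. A pleasant byproduct of the symmetry of the least-squares formulation is that the admissible range of $\lambda$ is the full positive half-line, rather than the neighborhood of $1$ that constrains the non-symmetric analysis, since the analysis proceeds through the nonnegative quantity $\|A:Dv\|_{L^2(\Omega)}^2$ without any sign-indefinite cross term.
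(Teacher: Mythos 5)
Your proof follows the paper's proof essentially verbatim: the same chain $|\ddiv v|\leq\gamma|A:Dv|+\sqrt{1-\varepsilon}\,|Dv|$ from \eqref{e:ABpointwise} and Lemma~\ref{l:sueli}, the same use of \eqref{e:divrotestimate} to trade $\|Dv\|$ for $\|\ddiv v\|+\|\rot v\|$, the absorption and the Young-inequality step that produce exactly $c_\lambda^\tau$ and $\sigma_\lambda^\tau$, the appeal to \cite[Thm.~5.2.2]{BoffiBrezziFortin2013} with $p=0$, and plugging $e=\nabla u - w_h$ into the coercivity for reliability. The only difference is cosmetic bookkeeping (you divide by $\|\gamma\|_{L^\infty(\Omega)}^2$ at the end, the paper divides by $\|\gamma\|_{L^\infty(\Omega)}$ before squaring).

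One small technical flag in the last line of your efficiency argument: the pointwise bound $|\rot(\cdot)|\leq|D(\cdot)|$ is not true; the sharp pointwise constant is $\sqrt{2}$ in both $d=2$ and $d=3$ (e.g.\ take $\partial_1 v_2=-\partial_2 v_1$, $\partial_1 v_1=\partial_2 v_2=0$ so that $(\rot v)^2=2|Dv|^2$). Decomposing $Dv$ into symmetric and antisymmetric parts, and using that (the relevant symmetric part of) $A$ sees only $(Dv)_{\mathrm{sym}}$ while $\rot$ sees only $(Dv)_{\mathrm{anti}}$ with $|\rot v|^2=2|(Dv)_{\mathrm{anti}}|^2$, yields
$\|A:De\|_{L^2(\omega)}^2+\sigma_\lambda^\tau(\gamma,\varepsilon)^2\|\rot e\|_{L^2(\omega)}^2
\leq \max\bigl(\|A\|_{L^\infty(\omega)}^2,\,2\,\sigma_\lambda^\tau(\gamma,\varepsilon)^2\bigr)\,\|De\|_{L^2(\omega)}^2$,
which is not dominated by $\bigl(\|A\|_{L^\infty(\omega)}^2+\sigma_\lambda^\tau(\gamma,\varepsilon)^2\bigr)\|De\|_{L^2(\omega)}^2$ in general. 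This constant issue appears to be inherited from the proposition's own statement (the paper's proof only says ``the a~posteriori bounds are immediate''), and it is harmless for the adaptive-convergence argument, which only needs \emph{some} fixed efficiency constant; nevertheless, your explicit $|\rot(\cdot)|\leq|D(\cdot)|$ should be $|\rot(\cdot)|\leq\sqrt{2}\,|D(\cdot)|$, with the efficiency constant adjusted accordingly.
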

\begin{proof}
The estimate \eqref{e:ABpointwise} and Lemma~\ref{l:sueli},
the relation \eqref{e:divrotestimate} and the triangle inequality
prove for any $v\in W$ that
\begin{equation*}
\begin{aligned}
\|\ddiv v\|_{L^2(\Omega)}
&\leq
\|\gamma A:Dv\|_{L^2(\Omega)} + \sqrt{1-\varepsilon} 
\|Dv\|_{L^2(\Omega)}
\\
&\leq
\|\gamma\|_{L^\infty(\Omega)}
\|A:Dv\|_{L^2(\Omega)} + \sqrt{1-\varepsilon} \,
(\|\ddiv v\|_{L^2(\Omega)} + \|\rot v\|_{L^2(\Omega)}) .
\end{aligned}
\end{equation*}
With the constant $c(\gamma,\varepsilon)$, this is
equivalent to
\begin{equation*}
c(\gamma,\varepsilon) \|\ddiv v\|_{L^2(\Omega)}
\leq
\|A:Dv\|_{L^2(\Omega)}
+
\sqrt{1-\varepsilon}/\|\gamma\|_{L^\infty(\Omega)}
\|\rot v\|_{L^2(\Omega)} .
\end{equation*}
Taking squares on both sides and using the Young inequality,
one arrives at
\begin{equation*}
\begin{aligned}
&
c(\gamma,\varepsilon)^2 \|\ddiv v\|_{L^2(\Omega)}^2
\\
&\quad
\leq
(1+\lambda)
\|A:Dv\|_{L^2(\Omega)}^2
+
(1+1/\lambda)
(1-\varepsilon)/\|\gamma\|_{L^\infty(\Omega)}^2
\|\rot v\|_{L^2(\Omega)}^2 .
\end{aligned}
\end{equation*}
Adding 
$c(\gamma,\varepsilon)^2 \|\rot v\|_{L^2(\Omega)}^2$
and dividing by $(1+\lambda)$
leads with \eqref{e:divrotestimate} to
\begin{equation*}
c_\lambda^\tau(\gamma,\varepsilon)^2 \|D v\|_{L^2(\Omega)}^2
\leq
\|A:Dv\|_{L^2(\Omega)}^2
+
\sigma_\lambda^\tau(\gamma,\varepsilon)^2
\|\rot v\|_{L^2(\Omega)}^2 .
\end{equation*}
Hence, $\tilde a$ satisfies the coercivity
\begin{equation*}
c_\lambda^\tau(\gamma,\varepsilon)^2 \|D v\|_{L^2(\Omega)}^2
\leq
\tilde a(v,v)
\quad\text{for any }v\in W.
\end{equation*}
As in the proof of Proposition~\ref{p:mixedanalysisNS},
this and the stability condition \eqref{e:infsup_d} establish
the unique solvability and the a~priori error estimate
with the constant from
\cite[Thm.~5.2.2]{BoffiBrezziFortin2013}.
The proof of the  a~posteriori bounds is immediate.
\end{proof}

\begin{remark}
The a~posteriori bounds in 
Propositions~\ref{p:confanalysis},
\ref{p:mixedanalysisNS}, \ref{p:mixedanalysisLS}
are fully explicit.
The evaluation of integrals of the $L^\infty$ coefficient $A$
may, however, be computationally challenging in practice,
cf.\ the numerical experiments in \S\ref{s:num}.
\end{remark}

\begin{remark}
Clearly,
an a~priori error estimate for the difference
$p-p_h$ in the $L^2$ norm in the fashion of
Propositions~\ref{p:mixedanalysisNS}, \ref{p:mixedanalysisLS}
can also be obtained.
Since, in this context, the
Lagrange multiplier is not of particular interest,
its analysis not included in the proposition.
Moreover, using \eqref{e:infsup_d} and the fact that $p=0$,
it can be shown that the error is bounded from above
by some constant times the suggested error estimator.
\end{remark}

\begin{remark}
In three space dimensions, subspaces of $Q$ must satisfy 
a pointwise divergence-free constraint.
In \cite{Gallistl2016}
the space $Q_h$
of divergence-free lowest-order Raviart-Thomas fields
was used. This space 
$Q_h$ consists exactly of all piecewise constant
vector fields that are continuous in the inter-element normal
directions
and whose normal component vanishes on the boundary $\partial\Omega$.
Another approach could be to further soften the formulation by
enforcing the divergence-free constraint in a weak manner.
This would involve an additional Lagrange multiplier
also arising in the error estimates.
\end{remark}

\section{An adaptive algorithm and its convergence}\label{s:afem}

This section is devoted to the description of an adaptive algorithm
for the  discretization methods from \S\ref{s:FEM}
and the proof of its convergence.
\subsection{Assumptions on the discrete spaces}
Let $\mathbb T$ denote a set of admissible shape-regular partitions
refined from some initial mesh $\mathcal T_0$
of $\overline\Omega$. The partitions may consist of triangles/tetrahedra
or quadrilaterals/hexahedra.
Shape-regularity is meant in the sense
that (i) there exist positive constants $c$ and $C$ such
that for any $\mathcal T\in\mathbb T$ and any $T\in\mathcal T$,
$c\operatorname{meas}(T)\leq \diam(T)^d\leq C \operatorname{meas}(T)$
and (ii) any two neighbouring elements $T,K\in\mathcal T$ satisfy
$c\leq \diam(T)/\diam(K)$.
This property is respected by many refinement routines like
newest-vertex bisection \cite{BinevDahmenDeVore2004},
but also refinements involving hanging
nodes are allowed as long as the number of hanging nodes
per interface stays uniformly bounded.
The shape-regularity implies that there is some $\alpha>0$
such that any $T\in\mathcal T\in\mathbb T$ and any refined element
$\hat T\subsetneq T$ in a refined partition $\widehat{\mathcal T}$,
satisfy $\operatorname{meas}(\hat T)\leq \alpha \operatorname{meas}(T)$.
The discretization spaces from \S\ref{ss:standardfem}
(resp.\ \S\ref{ss:mixedFEM}) are
labelled with the partitions in $\mathbb T$ and are 
denoted by $V(\mathcal T)$ (resp.\ $W(\mathcal T)$ and $Q(\mathcal T)$)
rather than $V_h$ etc.\ in \S\ref{s:FEM}.
The spaces are assumed to be nested on refined triangulations.
It is assumed that $\mathbb T$ contains sufficiently many refinements
so that for any $\mathcal T\in\mathbb T$ and any $\delta>0$ there is 
some refinement $\widehat{\mathcal T}\in\mathbb T$ such that
for any $T\in\widehat{\mathcal T}$ the diameter satisfies
$\diam(T)\leq\delta$.
It is furthermore assumed that there exist a stable, projective,
quasi-local quasi-interpolation operator, i.e.,
there is a constant $C$ such that for any $\mathcal T\in\mathbb T$
there is a linear idempotent map $I_{\mathcal T}:V\to V(\mathcal T)$
(resp. $I_{\mathcal T}:W\to W(\mathcal T)$ for the mixed method)
such that, for any $T\in\mathcal T$,
the estimate
\begin{align}
 \notag
&
\| D^2 I_{\mathcal T}v\|_{L^2(T)} \leq C \| D^2 v\|_{L^2(\omega_T)}
\quad\text{for any }v\in V \quad\text{(resp.\ }
\\
\label{e:clementMixed}
&
\diam(T)^{-1} \| z- I_{\mathcal T} z\|_{L^2(T)} +
\| D I_{\mathcal T}z\|_{L^2(T)} \leq C \| D z\|_{L^2(\omega_T)}
\quad\text{ for any }z\in W \text{)}
\end{align}
holds, where $\omega_T$ denotes the element-patch of $T$, i.e.,
the union of all elements of $\mathcal T$ sharing a point with $T$.
(This assumption can be relaxed by requiring $\omega_T$ to be
some surrounding domain with finite overlap property.)
Since this quasi-interpolation is a stable projection, it is also
quasi-optimal. It is assumed that for any sequence 
$(\mathcal T_\ell)_\ell$ of partitions with
$\max_{T\in\mathcal T_\ell}\diam(T)\to 0$ as $\ell\to\infty$,
the spaces $V(\mathcal T_\ell)$ 
(resp.\ $W(\mathcal T_\ell)$) are dense in $V$ (resp.\ $W$).
This implies that, for any $v\in V$,
the quasi-interpolation $I_{\mathcal T_\ell}v$
converges to $v$ in the $H^2$ norm (resp.\ in the $H^1$ norm).
These requirements are met for most of the known $H^2$ conforming
finite elements based on piecewise polynomials.
It is, however, important to note that not all
$H^2$ conforming finite elements lead to nested spaces.
The Argyris FEM and the Hsieh--Clough--Tocher FEM \cite{Ciarlet1978},
for example, do not satisfy this property.
A positive example is the BFS FEM \cite{Ciarlet1978}
used in the numerical experiments below.
In the case of mixed methods, the discretizations of $W$ need only
be $H^1$ conforming, and quasi-interpolation operators for such
spaces are well-established.
Their existence is typically assured
through the shape-regularity.
In addition,
the mixed finite element spaces are assumed to satisfy
Assumption~\ref{a:infsuploc} stated below.

\subsection{Adaptive algorithm and convergence proof}
The algorithm departs
 from an initial mesh $\mathcal T_0$ and runs the following
loop over the index $\ell=0,1,2,\dots$
\\
\textbf{Solve.}
Solve the discrete problem 
\eqref{e:discrprobconf} (resp.\ \eqref{e:discrsaddlepoint})
with respect to the mesh $\mathcal T_\ell$ and the
space $V(\mathcal T_\ell)$. Denote the solution by $u_\ell$
(resp.\ $(w_\ell,p_\ell))$.
\\
\textbf{Estimate.}
 Compute, for any $T\in\mathcal T_\ell$,
 the local error estimator contributions 
 $\eta^2_\ell(T) = \| A:D^2 u_\ell -f\|_{L^2(T)}^2$
 (resp.\ $\eta^2_\ell(T) = \| A:D w_\ell -f\|_{L^2(T)}^2
              + \sigma_\lambda^\tau(\gamma,\varepsilon)^2 \| \rot w_\ell\|_{L^2(T)}^2$).
\\
\textbf{Mark.}
Choose some (any) subset $\mathcal M_\ell\subseteq\mathcal T_\ell$
satisfying $T\in\mathcal M_\ell$ for at least one
$T\in\mathcal T_\ell$ with 
$\eta_\ell^2(T) = \max_{K\in\mathcal T_\ell}\eta_\ell^2(K)$.
\\
\textbf{Refine.}
Compute a refined admissible partition $\mathcal T_{\ell+1}$
of $\mathcal T_\ell$ such that at least 
all elements of $\mathcal M_\ell$ are
refined.

\begin{remark}
In view of the different weights in the error 
estimators of
Propositions~\ref{p:mixedanalysisNS}--\ref{p:mixedanalysisLS},
 it is worth mentioning that
one can choose different weights for the contributions of 
$\eta_\ell(T)$. This is, however, of minor importance for the 
convergence analysis.
\end{remark}

This adaptive algorithm is formulated in a fairly general
way, see also \cite{MorinSiebertVeeser2008};
it admits various
existing marking procedures, for instance the maximum
marking or the D\"orfler marking \cite{Doerfler1996}.
The refinement step typically involves some minimality condition
on the refined partition to gain efficiency.
In \S\ref{s:num} an instance of an adaptive algorithm with more
details is presented.
However, the present form of the algorithm suffices for
convergence. A similar argument was used by
\cite{MorinSiebertVeeser2008}.
The difference is that the residuals in the present error estimator
are strong $L^2$ residuals. This has the effect that no data
oscillations enter the convergence analysis. The main reason is
that the efficiency proof does not require the usual
techniques employing bubble functions \cite{Verfuerth2013}.

Consider the sequence $(\mathcal T_\ell)_\ell$ produced by the
adaptive algorithm.
The convergence proofs employ the subset
$\mathcal K\subseteq \cup_{\ell\geq0}\mathcal T_\ell$
of never refined elements defined by
$$
\mathcal K :=\bigcup_{\ell\geq0}\bigcap_{m\geq\ell}\mathcal T_m.
$$
The set was already utilized in \cite{MorinSiebertVeeser2008}.
It is the set of elements that are never refined once they are
created.
Accordingly, for any $\ell\geq0$, the partition $\mathcal T_\ell$
can be written as the following disjoint union
\begin{equation}\label{e:TellDecomp}
 \mathcal T_\ell =
    \mathcal K_\ell \cup \mathcal R_\ell
\qquad\text{for }
\mathcal K_\ell:=\mathcal K\cap\mathcal T_\ell
\text{ and }
\mathcal R_\ell:=\mathcal T_\ell\setminus\mathcal K_\ell .
\end{equation}
By definition, every element of $\mathcal R_\ell$
is eventually refined and the measure of those children that do not belong 
to $\mathcal K$ is reduced by a factor $\alpha$.
Hence, for any $\varepsilon>0$ there exists some
$\ell_0\geq0$ such that, for all $\ell\geq\ell_0$,
\begin{equation}\label{e:maxmeaseps}
 \max_{T\in\mathcal R_\ell}\operatorname{meas}(T)<\varepsilon.
\end{equation}
The following result proves the convergence of the adaptive
conforming scheme.
More details on the arguments employed here can be found in, e.g.,
\cite{MorinSiebertVeeser2008,NochettoSiebertVeeser2009}.

\begin{proposition}\label{p:afemconvergenceConforming}
Let $\tau=\tau\ns$ or $\tau=\tau\ls$.
 Let the admissible partitions $\mathbb T$ and the discrete
spaces $V(\mathcal T)$ for any $\mathcal T\in\mathbb T$ satisfy
the assumptions from the beginning of this section.
Then the sequence $u_\ell\in V(\mathcal T_\ell)$
produced by the adaptive algorithm
converges to the exact solution $u\in V$, i.e.,
$\| D^2(u-u_\ell)\|_{L^2(\Omega)}\to 0$ as $\ell\to\infty$.
\end{proposition}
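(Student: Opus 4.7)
The plan follows the indirect strategy of \cite{MorinSiebertVeeser2008}, adapted to the strong-$L^2$ residual framework of the present setting. The first step is to pass to a Galerkin limit. By nestedness of the $V(\mathcal T_\ell)$ and by the coercivity of $a_\tau(v,w):=(A:D^2v,\tau(\nabla w))_{L^2(\Omega)}$ on $V$ (from \eqref{e:lowerboundSmearsSueli} if $\tau=\tau\ns$, or \eqref{e:H2prodlowerbound} if $\tau=\tau\ls$), the form $a_\tau$ remains coercive on the closed subspace $V_\infty:=\overline{\bigcup_{\ell\ge 0}V(\mathcal T_\ell)}^{\,H^2(\Omega)}\subseteq V$, and the unique Galerkin solution $u_\infty\in V_\infty$ satisfies $u_\ell\to u_\infty$ in $V$ by a standard C\'ea-type argument in nested subspaces. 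Consequently $\rho_\ell:=A:D^2u_\ell-f\to r_\infty:=A:D^2u_\infty-f$ strongly in $L^2(\Omega)$, and by the reliability bound in Proposition~\ref{p:confanalysis} the claim reduces to showing $r_\infty=0$.

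Next I would use the max-marking rule to prove that the maximum estimator vanishes. Any element $T^*_\ell$ attaining $\eta_\ell(T^*_\ell)=\max_{T\in\mathcal T_\ell}\eta_\ell(T)$ is marked and therefore refined, so $T^*_\ell\in\mathcal R_\ell$ and by \eqref{e:maxmeaseps} one has $\operatorname{meas}(T^*_\ell)\to 0$. Absolute continuity of the Lebesgue integral gives $\|r_\infty\|_{L^2(T^*_\ell)}\to 0$, and together with the global $L^2$ convergence $\rho_\ell\to r_\infty$ this implies $\eta_\ell(T^*_\ell)\to 0$. For any $T\in\mathcal K$, which eventually belongs to $\mathcal T_\ell$, the bound $\eta_\ell(T)\le\eta_\ell(T^*_\ell)\to 0$ combined with $\eta_\ell(T)\to\|r_\infty\|_{L^2(T)}$ forces $\|r_\infty\|_{L^2(T)}=0$, and summing over $T\in\mathcal K$ yields $r_\infty=0$ almost everywhere on $\cup\mathcal K$.

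To extend the vanishing to $E:=\Omega\setminus\overline{\cup\mathcal K}$, I would test the limiting Galerkin identity $(r_\infty,\tau(\nabla v))_{L^2(\Omega)}=0$ for $v\in V_\infty$ against functions localized on $E$. If $v\in V$ has compact support in the open set $E$, then for $\ell$ sufficiently large the element patches contributing to the quasi-interpolation error near $\operatorname{supp}v$ all lie in $\mathcal R_\ell$ and have vanishing diameter, so the standard local estimates give $I_{\mathcal T_\ell}v\to v$ in $V$ and hence $v\in V_\infty$. Combining $(r_\infty,\tau(\nabla v))_{L^2(\Omega)}=0$ for all such $v$ with $r_\infty|_{\cup\mathcal K}=0$ and the Cordes-type uniqueness of Lemma~\ref{l:sueli} applied locally on $E$ yields $r_\infty=0$ almost everywhere on $E$. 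Thus $A:D^2u_\infty=f$ in $L^2(\Omega)$, whence $u_\infty=u$ by unique solvability, and the reliability estimate of Proposition~\ref{p:confanalysis} transfers $\|\rho_\ell\|_{L^2(\Omega)}\to 0$ into $\|D^2(u-u_\ell)\|_{L^2(\Omega)}\to 0$.

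The main obstacle lies in this last step. The max-estimator argument alone furnishes only $\max_{T\in\mathcal T_\ell}\|r_\infty\|_{L^2(T)}\to 0$, a property compatible with a nonzero $r_\infty$ of bounded density spread over elements of shrinking measure, so the proof cannot rely on purely element-wise Lebesgue differentiation. It must combine the density of quasi-interpolations of compactly supported functions in the limit space $V_\infty$ on the refinement region with the variational structure of the formulation to rule out such possibilities. By contrast, the limit identification, strong $L^2$ convergence of the residual, and vanishing of $r_\infty$ on $\cup\mathcal K$ are all direct consequences of nestedness, the measure-shrinking property \eqref{e:maxmeaseps} of $\mathcal R_\ell$, and the $L^2$ absolute continuity of integrals.
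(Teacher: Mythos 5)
Your first two steps match the paper's: the Galerkin limit $u_\infty$ in the closure $V_\infty$ of $\bigcup_\ell V(\mathcal T_\ell)$, the strong $L^2$ convergence of the residual, and the vanishing of $\|A:D^2 u_\ell - f\|_{L^2(\cup\mathcal K)}$ via the max-marking property together with \eqref{e:maxmeaseps} and dominated convergence. You also correctly identify that the sticking point is what happens on $E:=\Omega\setminus\overline{\cup\mathcal K}$, and you candidly flag it as unresolved; in its present form the proposal has a genuine gap there.

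The route you sketch for closing the gap — testing $(r_\infty,\tau(\nabla v))_{L^2(\Omega)}=0$ against $v\in V$ with compact support in $E$ and invoking ``Cordes-type uniqueness locally on $E$'' — would not work. The orthogonality of $r_\infty$ to $\tau(\nabla v)$ for compactly supported $v$, combined with $r_\infty|_{\cup\mathcal K}=0$, does not force $r_\infty|_E=0$: the set $\{\tau(\nabla v): v\in V,\ \operatorname{supp}v\subset E\}$ is a proper, far-from-dense subspace of $L^2(E)$ (for $\tau\ls$ it is the range of $A:D^2$ on $H^2_0$-type functions). Moreover Lemma~\ref{l:sueli} and the Cordes/convexity machinery give uniqueness for the Dirichlet problem on a convex domain; they provide no local uniqueness statement on a non-convex subregion $E$ in the absence of boundary data on $\partial E$, and the orthogonality you derived is of Neumann type rather than Dirichlet. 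So even granting that $C^\infty_c(E)\cap V\subset V_\infty$, the conclusion $r_\infty=0$ a.e.\ on $E$ does not follow from the ingredients at hand.

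The paper closes the argument by a different and sharper mechanism that avoids characterizing $r_\infty$ on $E$ entirely. Using coercivity and the identity $A:D^2u=f$ one bounds the total estimator (up to a constant $c>0$ depending on $\tau$) by $(A:D^2(u-u_\ell),\tau(\nabla(u-u_\ell)))_{L^2(\Omega)}$. Galerkin orthogonality replaces $u-u_\ell$ in the test slot by the quasi-interpolation error $u-I_{\mathcal T_\ell}u$, and a Cauchy–Schwarz split over $\cup\mathcal K$ and $\cup\mathcal R_\ell$ gives
\begin{equation*}
\begin{aligned}
c\sum_{T\in\mathcal T_\ell}\eta_\ell^2(T)
&\leq
\|A:D^2 u_\ell - f\|_{L^2(\cup\mathcal K)}\,
\|\tau(\nabla(u-I_{\mathcal T_\ell}u))\|_{L^2(\cup\mathcal K)}
\\
&\qquad+
\|A:D^2 u_\ell - f\|_{L^2(\cup\mathcal R_\ell)}\,
\|\tau(\nabla(u-I_{\mathcal T_\ell}u))\|_{L^2(\cup\mathcal R_\ell)}.
\end{aligned}
\end{equation*}
The first product vanishes because of the residual convergence on $\cup\mathcal K$ (which you already have) and the uniform stability of $I_{\mathcal T_\ell}$; the second vanishes because the local mesh-size on $\cup\mathcal R_\ell$ shrinks (by \eqref{e:maxmeaseps}), so the interpolation error there tends to zero while the residual factor stays bounded. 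Reliability (Proposition~\ref{p:confanalysis}) then converts $\sum_T\eta_\ell^2(T)\to 0$ into $\|D^2(u-u_\ell)\|_{L^2(\Omega)}\to 0$. You should redirect your final step toward this quasi-interpolation/Galerkin-orthogonality bound of the total estimator rather than pursuing the pointwise characterization of $r_\infty$ on $E$.
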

\begin{proof}
The sequence $(u_\ell)_\ell$ converges in $V$ to some limit
$u_\star$ (because the $u_\ell$ are in particular the Galerkin
approximations of the variational problem posed on the closure
of the union of the nested spaces 
$V(\mathcal T_\ell)$, $\ell\geq 0$),
i.e.,
\begin{equation*}
 \|D^2(u_\star - u_\ell) \|_{L^2(\Omega)}\to 0
\quad\text{as }\ell\to\infty.
\end{equation*}
It is not a~priori known that $u_\star = u$, since it is not clear
whether the global mesh-size converges to zero in all regions of the 
domain $\Omega$.
Recall
decomposition \eqref{e:TellDecomp} of $\mathcal T_\ell$ in
never refined ($\mathcal K_\ell$) and eventually refined
($\mathcal R_\ell$) elements.
The local efficiency of the error estimator 
(Proposition~\ref{p:confanalysis})
and the
triangle inequality prove for any $T\in\mathcal T_\ell$ that
\begin{equation*} 
 \eta_\ell^2(T)
\leq 2\|A\|_{L^\infty(\Omega)}^2
\big( \| D^2(u-u_\star)\|_{L^2(T)}^2 
         + \| D^2(u_\star-u_\ell)\|_{L^2(T)}^2
\big).
\end{equation*}
Since 
$\| D^2(u_\star-u_\ell)\|_{L^2(T)}\to0$
 as $\ell\to\infty$, one concludes with
\eqref{e:maxmeaseps} that for any $\rho>0$ there exists some
$\tilde\ell_0\geq0$ such that, for all $\ell\geq\tilde\ell_0$,
$$
 \max_{T\in\mathcal K_\ell} \eta_\ell^2(T) \leq
 \max_{T\in\mathcal R_\ell} \eta_\ell^2(T) 
 <\rho
$$
(if this was not the case, the properties of the marking step
 would imply that eventually an element in $\mathcal K_\ell$
is refined, which contradicts its membership in $\mathcal K_\ell$).
In conclusion, for any $T\in\mathcal K$,
$\|A:D^2 u_\ell -f\|_{L^2(T)}\to0$ as $\ell\to\infty$.
In order to conclude convergence in $L^2(\cup\mathcal K)$,
observe that
$\sum_{T\in\mathcal K}\operatorname{meas}(T)
\leq \operatorname{meas}(\Omega)$,
that is, the series on the left-hand side converges.
Thus, the measure of $(\cup\mathcal K)\setminus (\cup\mathcal K_m)$
becomes arbitrary small for sufficiently large $m$.
Given $\delta>0$, the dominated convergence theorem therefore
implies that, for sufficiently large $m_0$ and
all $m\geq m_0$,
$\| A:D^2 u_\star - f\|_{L^2((\cup\mathcal K)\setminus(\cup\mathcal K_m)))}\leq \delta/2$.
The elementwise convergence
and $u_\ell\to u_\star$ in $H^2(\Omega)$ therefore imply that,
for some sufficiently large $\hat\ell_0>0$, there holds
\begin{equation*}
\begin{aligned}
 \| A:D^2 u_\ell - f\|_{L^2(\cup\mathcal K)}
&
\leq
 \| A:D^2 u_\ell - f\|_{L^2(\cup\mathcal K_{m_0})}
+
\| A:D^2 (u_\ell - u_\star)\|_{L^2((\cup\mathcal K)\setminus(\cup\mathcal K_{m_0}))}
\\
&\quad
+
\| A:D^2 u_\star - f\|_{L^2((\cup\mathcal K)\setminus(\cup\mathcal K_{m_0}))}
\leq \delta
\qquad\text{for all }\ell\geq\hat\ell_0.
\end{aligned}
\end{equation*}
Hence,
$\| A:D^2 u_\ell - f\|_{L^2(\cup\mathcal K)} \to 0$
as $\ell\to\infty$.

The $L^2$ identity $A:D^2u=f$,
the definition of $\tau$ and \eqref{e:lowerboundSmearsSueli} imply
\begin{equation*}
 \sum_{T\in\mathcal T_\ell} \eta_\ell^2(T)
\begin{cases}
 \leq 
  \frac{ \|A\|_{L^\infty(\Omega)}^2}
       {1-\sqrt{1-\varepsilon}}
  (A:D^2(u-u_\ell),\tau(\nabla(u-u_\ell)))_{L^2(\Omega)}
 &\text{if }\tau=\tau\ns,
\\
 =(A:D^2(u-u_\ell),\tau(\nabla(u-u_\ell)))_{L^2(\Omega)}
&\text{if }\tau=\tau\ls.
\end{cases}
\end{equation*}
Thus, with a constant $c>0$ (depending on the choice of $\tau$),
the Galerkin orthogonality, the Cauchy inequality and 
the $L^2$ identity $A:D^2u=f$  lead, 
for any $\ell\geq0$, to
\begin{equation*}
\begin{aligned}
 c
 \sum_{T\in\mathcal T_\ell} \eta_\ell^2(T)
&
\leq
(A:D^2(u-u_\ell),\tau(\nabla(u-u_\ell)))_{L^2(\Omega)}
\\
&=
 (A:D^2(u-u_\ell),\tau(\nabla(u-I_{\mathcal T_\ell}u)))_{L^2(\Omega)}
\\
&
\leq
\| A:D^2 u_\ell -f\|_{L^2(\cup\mathcal K)}
\| \tau(\nabla(u-I_{\mathcal T_\ell}u))\|_{L^2(\cup\mathcal K)}
\\
&\qquad
+
\| A:D^2 u_\ell -f\|_{L^2(\cup\mathcal R_\ell)}
\| \tau(\nabla(u-I_{\mathcal T_\ell}u))\|_{L^2(\cup\mathcal R_\ell)}.
\end{aligned}
\end{equation*}
The convergence 
$A:D^2u_\ell\to f$ in $L^2(\cup\mathcal K)$
and the convergence of the
quasi-interpolation show that both summands on the right-hand
side converge to zero.
Since by Proposition~\ref{p:confanalysis} the error estimator is
reliable, the proof is concluded.
\end{proof}

The convergence proof of the mixed scheme requires a
mild structural hypothesis on the discrete spaces.
Let $\ell\in\mathbb N$ and $T\in\mathcal K_\ell$
and consider the first-order neighbourhood and its triangulation
defined by
$$
\omega_{T,\mathcal K}:=
\operatorname{int}(\cup\{K\in\mathcal K: T\cap K\neq \emptyset\})
\quad\text{with}\quad
 \mathcal T(\omega_{T,\mathcal K})
 :=
 \{K\in\mathcal K: T\cap K\neq \emptyset\}.
$$
The shape-regularity assumption assures that there exists
$n(\ell,T)\geq\ell$ such that
$\mathcal T(\omega_{T,\mathcal K})\subseteq\mathcal K_m$
holds
for all $m\geq n(\ell,T)$. This means that, eventually, all neighbours
of $T$ belong to $\mathcal K$.
Define 
the spaces $W(\mathcal T(\omega_{T,\mathcal K}))$ of functions
in $W(\mathcal T_m)$ with support in $\overline{\omega_{T,\mathcal K}}$
and $Q(\mathcal T(\omega_{T,\mathcal K}))$ of functions from 
$Q(\mathcal T_m)$ that are restricted to $\omega_{T,\mathcal K}$.
\begin{assumption}\label{a:infsuploc}
All $\ell\in\mathbb N$ and all $T\in\mathcal K_\ell$ satisfy:
if $q\in Q(\mathcal T(\omega_{T,\mathcal K}))$ fulfils
$(\rot v,q)_{L^2(\omega_{T,\mathcal K})}=0$
 for all $v\in W(\mathcal T(\omega_{T,\mathcal K}))$,
then $(\rot v,q)_{L^2(\omega_{T,\mathcal K})}=0$ holds for 
all $v\in H^1_0(\omega_{T,\mathcal K};\mathbb R^d)$.
\end{assumption}
Assumption~\ref{a:infsuploc}
basically states that a full-rank condition like \eqref{e:infsup_d}
remains true on element patches. It is, however, a purely algebraic
and, thus, weaker condition. As will turn out in the proof 
of Proposition~\ref{p:afemconvergenceMixed}, the assumption could be 
further relaxed by allowing other suitable overlapping neighbourhoods.
In two space dimensions, Assumption~\ref{a:infsuploc} 
(or a relaxed version with larger patches)
is satisfied
by many (if not all) known finite elements for the Stokes problem.
In particular, pairings whose stability proof is based on
the design of a Fortin operator or the macro-element technique 
are included.
Pathological situations in which an element $T$ does not have
enough neighbours
(e.g., all vertices on lie on $\partial \mathcal K$) can 
only occur if $T$ meets the boundary $\partial\Omega$
(due to the shape-regularity). Hence, such situations are excluded
if the class $\mathbb{T}$ of triangulations is chosen properly.
In three dimensions, where the problem significantly differs from the 
Stokes equations, the pairing from \cite{Gallistl2016}, which is based 
on face-bubble stabilization, satisfies Assumption~\ref{a:infsuploc}.
In all these cases, the verification of Assumption~\ref{a:infsuploc}
is independent of any a~priori knowledge about the sets 
$\mathcal K_\ell$.
The next result proves the convergence of the adaptive mixed
scheme.
\begin{proposition}\label{p:afemconvergenceMixed}
Let $\tau=\tau\ns$ or $\tau=\tau\ls$.
 Let the admissible partitions $\mathbb T$ and the discrete
spaces $W(\mathcal T)$, $Q(\mathcal T)$
 for any $\mathcal T\in\mathbb T$ satisfy
the assumptions from the beginning of this section
as well as Assumption~\ref{a:infsuploc}.
Then the sequence
$(w_\ell,p_\ell)\in W(\mathcal T_\ell)\times Q(\mathcal T_\ell)$
produced by the adaptive algorithm
converges to the exact solution $(w,p)\in W\times Q$. In particular,
$\| D^2 u- Dw_\ell\|_{L^2(\Omega)}\to 0$ as $\ell\to\infty$.
\end{proposition}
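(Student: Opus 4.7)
The plan is to adapt the proof of Proposition~\ref{p:afemconvergenceConforming} to the mixed setting; the saddle-point structure introduces one genuinely new difficulty, namely the coupling with the Lagrange multiplier $p_\ell$ on $\cup\mathcal K$, which is precisely what Assumption~\ref{a:infsuploc} is designed to handle.

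Because $\tilde a_\tau$ is $W$-coercive with constant $c_\lambda^\tau(\gamma,\varepsilon)^2$ and the inf-sup condition \eqref{e:infsup_d} is uniform in the discretisation, the saddle-point problem is also well-posed on the closed limit spaces $W_\infty:=\overline{\bigcup_\ell W(\mathcal T_\ell)}$ and $Q_\infty:=\overline{\bigcup_\ell Q(\mathcal T_\ell)}$, so standard mixed-FEM theory yields convergence of $(w_\ell,p_\ell)$ in $W\times Q$ to some Galerkin limit $(w_\star,p_\star)\in W_\infty\times Q_\infty$. The aim is to identify $w_\star=\nabla u$ (and $p_\star=0$), after which reliability (Propositions~\ref{p:mixedanalysisNS},~\ref{p:mixedanalysisLS}) closes the proof. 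Local efficiency and the triangle inequality give $\eta_\ell^2(T)\leq C(\|D(\nabla u-w_\star)\|_{L^2(T)}^2+\|D(w_\star-w_\ell)\|_{L^2(T)}^2)$ for every $T\in\mathcal T_\ell$; the marking and absolute-continuity argument of Proposition~\ref{p:afemconvergenceConforming}---the maximum over $\mathcal K_\ell$ is dominated by that over $\mathcal R_\ell$, whose elements shrink by \eqref{e:maxmeaseps}, and $w_\ell\to w_\star$ globally---then yields $\eta_\ell(T)\to 0$ for every $T\in\mathcal K$ and, by dominated convergence on $\cup\mathcal K$, $\|A:Dw_\ell-f\|_{L^2(\cup\mathcal K)}+\|\rot w_\ell\|_{L^2(\cup\mathcal K)}\to 0$.

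Next, coercivity of $\tilde a_\tau$ on $\nabla u - w_\ell$, together with $A:D^2u=f$, $\rot\nabla u=0$, and the discrete equation \eqref{e:discrsaddlepoint_b} tested with $I_{\mathcal T_\ell}\nabla u-w_\ell\in W(\mathcal T_\ell)$ (using also $b(w_\ell,p_\ell)=0$ and $b(\nabla u,p_\ell)=0$), yields the error identity
\begin{equation*}
\begin{aligned}
c_\lambda^\tau(\gamma,\varepsilon)^2\|D(\nabla u-w_\ell)\|_{L^2(\Omega)}^2
&\leq (A:Dw_\ell-f,\tau(I_{\mathcal T_\ell}\nabla u-\nabla u))_{L^2(\Omega)} \\
&\quad+\sigma_\lambda^\tau(\gamma,\varepsilon)^2(\rot w_\ell,\rot(I_{\mathcal T_\ell}\nabla u-\nabla u))_{L^2(\Omega)} \\
&\quad+b(I_{\mathcal T_\ell}\nabla u-\nabla u,p_\ell).
\end{aligned}
\end{equation*}
Each inner product is split across $\cup\mathcal K$ and $\cup\mathcal R_\ell$: on $\cup\mathcal K$ the residual factors vanish by the preceding step, and on $\cup\mathcal R_\ell$ the quasi-interpolation error tends to zero in the $H^1$ seminorm since $\max_{T\in\mathcal R_\ell}\diam(T)\to 0$ and $I_{\mathcal T_\ell}$ is stable and approximating.

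The main obstacle is the multiplier contribution $b(I_{\mathcal T_\ell}\nabla u-\nabla u,p_\ell)$ restricted to $\cup\mathcal K$, since neither the interpolation error nor $p_\ell$ is small there and $p_\star$ could a~priori be nontrivial. This is where Assumption~\ref{a:infsuploc} is invoked. For every $T\in\mathcal K$ the patch $\omega_{T,\mathcal K}$ eventually lies in $\mathcal K_m$ for $m\geq n(\ell,T)$, so functions $v\in W(\mathcal T(\omega_{T,\mathcal K}))$ are admissible test functions; passing to the limit in the discrete equation (restricted to such local tests) combined with the vanishing of $\|\rot w_\ell\|_{L^2(\cup\mathcal K)}$ established in the second paragraph shows that $p_\star|_{\omega_{T,\mathcal K}}$ is $L^2$-orthogonal to $\rot W(\mathcal T(\omega_{T,\mathcal K}))$. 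The assumption then extends the orthogonality to all of $\rot H^1_0(\omega_{T,\mathcal K};\mathbb R^d)$ and, together with the structural definition of $Q$, forces $p_\star=0$ on $\cup\mathcal K$. The multiplier contribution is therefore controlled by the already vanishing residual and the convergence $p_\ell\to p_\star$, and the reliability bound concludes the argument.
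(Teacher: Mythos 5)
Your proof follows the paper's architecture faithfully through the first two paragraphs: the Galerkin limit $(w_\star,p_\star)$, the local-efficiency-plus-shrinking-elements argument showing $\|A{:}Dw_\ell-f\|_{L^2(\cup\mathcal K)}+\|\rot w_\ell\|_{L^2(\cup\mathcal K)}\to 0$, the coercivity split, and the reduction of the problematic term to $b(I_{\mathcal T_\ell}\nabla u-\nabla u,p_\ell)$ restricted to $\cup\mathcal K_\ell$ with $p_\ell\to p_\star$. Your derivation of the orthogonality of $p_\star$ to $\rot\, W(\mathcal T(\omega_{T,\mathcal K}))$ and, via Assumption~\ref{a:infsuploc}, to $\rot\, H^1_0(\omega_{T,\mathcal K};\mathbb R^d)$, is also aligned with the paper.

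The gap is the final inference: you conclude that this orthogonality, ``together with the structural definition of $Q$, forces $p_\star=0$ on $\cup\mathcal K$.'' That is not true, and the paper does not claim it. In $d=2$, orthogonality of $p_\star$ to $\rot\, H^1_0(\tilde\omega;\mathbb R^2)$ on a connected component $\tilde\omega$ of $\cup\mathcal K\setminus\partial(\cup\mathcal K)$ only forces $p_\star$ to be \emph{constant} on $\tilde\omega$; the constraint $\int_\Omega p_\star\,dx=0$ is global and does not make these local constants zero (indeed the paper explicitly remarks that $p=0$ is not preserved by the discretization, so $p_\star$ need not vanish). In $d=3$ the orthogonal complement contains nontrivial curl-free, divergence-free fields. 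Consequently the term $(\rot I_{\mathcal T_\ell}w,\,p_\star)_{L^2(\cup\mathcal K_\ell)}$ is not annihilated, and it also does not go to zero automatically: since elements of $\mathcal K$ are never refined, $I_{\mathcal T_\ell}w$ does not converge to $w$ there, so $\rot I_{\mathcal T_\ell}w$ need not become small on $\cup\mathcal K$.

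What is missing is the paper's cut-off device. One picks a Lipschitz cut-off $\mu$ equal to $1$ on the interior of a component $\omega$ of $\cup\mathcal K_\ell\setminus\partial(\cup\mathcal K_\ell)$ and vanishing on $\Gamma_\omega=\partial(\cup\mathcal K_\ell)\setminus\partial\Omega$, with $\|\nabla\mu\|_{L^\infty(T)}\lesssim\diam(T)^{-1}$ on the boundary-touching elements. Writing $\rot I_{\mathcal T_\ell}w=-\rot\bigl(\mu(w-I_{\mathcal T_\ell}w)\bigr)-\rot\bigl((1-\mu)(w-I_{\mathcal T_\ell}w)\bigr)$, the first piece is compactly supported inside $\tilde\omega$ and is killed by the orthogonality of $p_\star$, while the second is supported only on the elements of $\mathcal K_\ell(\omega)$ that touch $\Gamma_\omega$ and is controlled via \eqref{e:clementMixed} by interpolation errors on those elements, whose total measure tends to zero as $\ell\to\infty$ by shape-regularity. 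This localization is precisely what replaces your unjustified ``$p_\star=0$ on $\cup\mathcal K$'' and closes the proof.
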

\begin{proof}
Similar as in the proof of Proposition~\ref{p:afemconvergenceConforming},
one can show that there exists a limit
$(w_\star,p_\star)\in W\times Q$ such that
$
\|D(w_\star - w_\ell) \|_{L^2(\Omega)}
+\|p_\star - p_\ell \|_{L^2(\Omega)}
\to 0
\quad\text{as }\ell\to\infty.
$
Consider again the set $\mathcal K$ and the decomposition
\eqref{e:TellDecomp}.
The local efficiency of the error estimator 
(Propositions~\ref{p:mixedanalysisNS} and \ref{p:mixedanalysisLS})
and the
triangle inequality prove for 
some constant $C>0$ (depending on the choice of $\tau$) and
any $T\in\mathcal T_\ell$ that
\begin{equation*} 
 \eta_\ell^2(T)
\leq
C
\big( \| D(w-w_\star)\|_{L^2(T)}^2 
         + \| D(w_\star-w_\ell)\|_{L^2(T)}^2
\big).
\end{equation*}
As in the proof of Proposition~\ref{p:afemconvergenceConforming},
one obtains
$$
\| A:D w_\ell - f\|_{L^2(\cup\mathcal K)}
  + \|\rot w_\ell\|_{L^2(\cup\mathcal K)} \to 0
 \quad\text{as }\ell\to\infty.
$$
Similar as in the proof of 
Proposition~\ref{p:afemconvergenceConforming},
the coercivity of $\tilde a_\tau$ shows that there exists a constant
$c>0$ (depending on the choice of $\tau$) such that with
the $L^2$ identity $A:D w=f$
and the quasi-interpolation $I_{\mathcal T_\ell}$ 
the following split is valid
\begin{equation}
\label{e:mixedsplit}
\begin{aligned}
&
 c\sum_{T\in\mathcal T_\ell} \eta_\ell^2(T)
\leq
(A:D(w-w_\ell), \tau(w-I_{\mathcal T_\ell}w))_{L^2(\Omega)}
\\
&\qquad
+(A:D(w-w_\ell), \tau(I_{\mathcal T_\ell}w -w_\ell))_{L^2(\Omega)}
+
\sigma_\lambda^\tau(\gamma,\varepsilon)^2
\|\rot w_\ell\|_{L^2(\Omega)}^2 .
\end{aligned}
\end{equation}
The first term on the right-hand side of \eqref{e:mixedsplit}
can be shown to converge to zero with the techniques from
Proposition~\ref{p:afemconvergenceConforming} because locally
it consists of products of error estimator and interpolation
error contributions.
Using the discrete equations 
\eqref{e:discrsaddlepoint_b}--\eqref{e:discrsaddlepoint_c}
and $A:Dw=f$,
the remaining terms of \eqref{e:mixedsplit} are transformed into
\begin{equation}\label{e:afemproofSplit} 
\sigma_\lambda^\tau(\gamma,\varepsilon)^2
(\rot w_\ell, \rot I_{\mathcal T_\ell}w)_{L^2(\Omega)} 
+
(\rot I_{\mathcal T_\ell}w, p_\ell)_{L^2(\cup \mathcal R_\ell)}
+
(\rot I_{\mathcal T_\ell}w, p_\ell)_{L^2(\cup \mathcal K_\ell)}
.
\end{equation}
Since $\rot I_{\mathcal T_\ell}w = \rot (I_{\mathcal T_\ell}w-w)$,
the first and second term can again be shown to converge to zero:
the first term is an
element-wise product of error estimator and interpolation error
contributions, while the second one is controlled by the 
interpolation error on the elements of $\mathcal R_\ell$.
The last term of \eqref{e:afemproofSplit} can be rewritten as
$
(\rot I_{\mathcal T_\ell}w, p_\ell-p_\star)_{L^2(\cup \mathcal K_\ell)}
+
(\rot I_{\mathcal T_\ell}w, p_\star)_{L^2(\cup \mathcal K_\ell)}
$
and, since $p_\ell\to p_\star$ in $L^2$, it remains to estimate
the term
$(\rot I_{\mathcal T_\ell}w, p_\star)_{L^2(\cup \mathcal K_\ell)}$.

For the analysis of this term, it is useful to note that,
for any connected component $\tilde\omega$ of 
$\cup\mathcal K \setminus \partial(\cup\mathcal K)$,
the function
$p_\star|_{\tilde\omega}$
 satisfies
$(\rot v,p_\star)_{L^2(\tilde\omega)}=0$ 
for all $v\in H^1_0(\Omega;\mathbb R^d)$ with support in $\tilde\omega$.
For the proof of this claim,
let 
$\omega\subseteq
         \cup\mathcal K_\ell \setminus \partial(\cup\mathcal K_\ell)$
be a connected component of 
$\cup\mathcal K_\ell \setminus \partial(\cup\mathcal K_\ell)$.
It is not difficult to see that $\omega$ is an open subset of 
one of the connected components of
$\cup\mathcal K \setminus \partial(\cup\mathcal K)$.
Let $\mathcal K_\ell(\omega)$ denote the set of all elements
of $\mathcal K_\ell$ whose interior has a non-empty intersection with
$\omega$.
It is easily verified that the 
limit $(w_\star,p_\star)$ satisfies
$
\tilde a_\tau (w_\star,v_\ell) + b(v_\ell, p_\star)
=
(f,\tau(v_\ell))_{L^2(\Omega)}
$
for all $\ell\geq 0$ and all $v_\ell\in W(\mathcal T_\ell)$
supported in $\omega$.
The fact that $A:D w_\star = f$ 
and $\rot w_\star=0$
in the $L^2$ sense on 
$\cup\mathcal K$
show that $(\rot v_\ell,p_\star)_{L^2(\Omega)}=0$.
Assumption~\ref{a:infsuploc} and an overlap argument prove
that $(\rot v,p_\star)_{L^2(\tilde\omega)}=0$ for any 
$v\in H^1_0(\tilde\omega;\mathbb R^d)$ with compact support inside
$\tilde\omega$, which implies the claim.

Let $\mu\in W^{1,\infty}(\omega)$ denote a positive cut-off function
with values in the interval $[0,1]$
taking the value $1$
on all elements of $\mathcal K_\ell(\omega)$
that do not meet the boundary
$\Gamma_\omega:=\partial(\cup\mathcal K_\ell)\setminus\partial\Omega$
and that vanishes on $\Gamma_\omega$,
and that satisfies, for some constant $C$ and for any element
$T\in \mathcal K_\ell(\omega)$
touching $\Gamma$ that
$
 \|\nabla\mu\|_{L^\infty(T)}\leq C \diam(T)^{-1} .
$
The boundary conditions of $\mu$,
the identity $\rot w =0$, and the product rule
lead to
\begin{equation*}
\begin{aligned}
&
 | (\rot (I_{\mathcal T_\ell} w),p_\star )_{L^2(\omega)} |
=
 |(\rot ((1-\mu)(w-I_{\mathcal T_\ell} w)),p_\star )_{L^2(\omega)} |
\\
\quad
&\leq
\sum_{\substack{ T\in \mathcal K_\ell(\omega) 
           \\ T\cap\Gamma_\omega\neq\emptyset}}
\left(
\|\rot(w-I_{\mathcal T_\ell} w)\|_{L^2(T)}
+ C \diam(T)^{-1}
\|w-I_{\mathcal T_\ell} w\|_{L^2(T)}
 \right) \|p_\star\|_{L^2(T)}.
\end{aligned}
\end{equation*}
Using the Cauchy inequality and \eqref{e:clementMixed},
this term can be shown to converge to zero
because by the shape-regularity the 
measure of the elements in $\mathcal T_\ell$
meeting the boundary
$ \partial (\cup\mathcal K_\ell)\setminus\partial\Omega$
converges to zero as $\ell\to\infty$.
In conclusion, the error estimator converges to zero,
and so does the error.
\end{proof}

\section{Numerical results}\label{s:num}
This section presents numerical experiments in
two space dimensions 
for the choice $\tau\ls$, that is the least-squares method.
\subsection{Numerical realization}
This subsection describes
the employed finite element methods and the used adaptive
algorithm. 

\subsubsection{Conforming scheme}
The $H^2$-conforming method used here is the 
Bog\-ner--Fox--Schmit (BFS) finite element \cite{Ciarlet1978}.
Let $\mathcal T$ be a rectangular partition of $\Omega$,
where one hanging node (that is a point shared by two or more rectangles
which is not vertex to all of them) per edge is allowed.
The finite element space $V_h$ is the subspace of $V$ consisting
of piecewise bicubic polynomials.
It is a second-order scheme with expected convergence of 
$\mathcal O(h^2)$ for $H^4$-regular solutions
on quasi-uniform meshes with maximal
mesh-size $h$. For the error in the $H^1$ and the $L^2$ norm,
the corresponding convergence order is 
$\mathcal O(h^3)$ and $\mathcal O(h^4)$, respectively.

\subsubsection{Mixed scheme}
As a mixed scheme, the Taylor--Hood finite element
\cite{BoffiBrezziFortin2013}
 is used.
For a regular triangulation of $\mathcal T$ of $\Omega$,
the space $W_h$ is the subspace of $W$ consisting of piecewise
quadratic polynomials while
$Q_h$ is the subspace of $Q$ consisting of piecewise affine and
globally continuous functions.
It is a second-order scheme with expected convergence of 
$\mathcal O(h^2)$ for $H^3$-regular solution $w$ (meaning that 
$u$ is $H^4$-regular)
on quasi-uniform meshes. 
For the error $\|w-\nabla u_h\|_{L^2(\Omega)}$,
the corresponding convergence order is 
$\mathcal O(h^3)$.
The computation of the primal variable $u_h$ is performed with
a standard finite element method based on piecewise quadratics.
The predicted convergence order in the $L^2$ norm is
$\mathcal O(h^3)$.

\subsubsection{Adaptive algorithm}

For any element $T\in\mathcal T$
the error estimators of Proposition~\ref{p:confanalysis}
and Proposition~\ref{p:mixedanalysisLS}
are abbreviated as follows
\begin{equation*}
\begin{aligned}
\eta_{\mathrm{conf}}^2(T)& = \| A:D^2 u_h - f\|_{L^2(T)}^2, \\
\eta_{\mathrm{mixed}}^2(T)& =
\| A:D w_h -f \|_{L^2(T)}^2
+\sigma_\lambda^\tau(\gamma,\varepsilon)^2 \|\rot w_h \|_{L^2(T)}^2.
\end{aligned}
\end{equation*}
Furthermore set
$
\eta_{\mathrm{conf}}
 :=\sqrt{\sum_{T\in\mathcal T}\eta_{\mathrm{conf}}^2(T)}
$, 
$
\eta_{\mathrm{mixed}}
 :=\sqrt{\sum_{T\in\mathcal T}\eta_{\mathrm{mixed}}^2(T)}
$.
The following adaptive algorithm is a concrete instance of
the procedure outlined in \S\ref{s:afem}.
It is based on the D\"orfler marking
\cite{Doerfler1996}
for some parameter $0<\theta\leq 1$.
In the following, $\eta_\ell$ refers to 
$\eta_{\mathrm{conf}}$ or $\eta_{\mathrm{mixed}}$,
depending on the used method.
Departing from an initial mesh $\mathcal T_0$ it runs the following
loop over the index $\ell=0,1,2,\dots$
\\
\textbf{Solve.}
Solve the discrete problem with respect to the mesh $\mathcal T_\ell$.
\\
\textbf{Estimate.}
 Compute the local error estimator contributions $\eta_\ell^2(T)$, 
 $T\in\mathcal T_\ell$, for the discrete solution.
\\
\textbf{Mark.}
Mark a minimal subset $\mathcal M\subseteq \mathcal T_\ell$
such that 
$\theta \eta_\ell^2\leq 
\sum_{T\in\mathcal M} \eta_\ell^2(T)$.
\\
\textbf{Refine.}
Compute a refined admissible partition $\mathcal T_{\ell+1}$
of $\mathcal T_\ell$
of minimal cardinality such that all elements of $\mathcal M$ are
refined.

For rectangular meshes, the local refinement splits every
rectangle in four congruent sub-rectangles
while further local refinements assure the property of only one
hanging node per edge.
On triangular meshes, newest-vertex bisection
\cite{BinevDahmenDeVore2004} is employed.

\subsection{Setup}
In all numerical experiments the domain is the square
$\Omega = (-1,1)^2$.
The parameter $\lambda$ for the stabilization
in the mixed scheme is chosen as $\lambda=1$.
All convergence history plots are logarithmically scaled.
The errors are plotted against the number of degrees of
freedom $\mathtt{ndof}$,
that is, the space dimension of $V_h$
resp.\ of $W_h\times Q_h$.
In the adaptive computation, the parameter $\theta$
is chosen $\theta=0.3$.
The coefficient $A$ reads
$$
A = \left[
        \begin{smallmatrix}
         2 & x_1 x_2/ ( |x_1|\, |x_2|) \\
         x_1 x_2/ ( |x_1|\, |x_2|) & 2
         \end{smallmatrix} 
     \right].
$$
The requirements of \S\ref{s:formulation} are met with
$\varepsilon = 3/5$,
$\|\gamma\|_{L^\infty(\Omega)}=2/5$, and  $\|A\|_{L^\infty(\Omega)}=2$,
so that
$c(\gamma,\varepsilon) = 5/2 -\sqrt{5/2}>0.91886$.
Three test cases are considered:

\subsection{Experiment~1}
In the first experiment
the known smooth solution
$u(x) = x_1 x_2 (1-\exp(1-|x_1|)(1-\exp(1-|x_2|)$
from \cite{SmearsSueli2013}
is considered.
The convergence history is displayed in 
Figure~\ref{f:convhistBFSsmooth} for the conforming BFS discretization
and in Figure~\ref{f:convhistTHsmooth} for the mixed Taylor--Hood
method. The convergence rates are of optimal order, that is
$\mathcal O(\mathtt{ndof}^{-1})$ for the approximation of the
Hessian and
$\mathcal O(\mathtt{ndof}^{-3/2})$ for the approximation of the 
gradient.
With the BFS element, $u$ is approximated  at the optimal
rate $\mathcal O(\mathtt{ndof}^{-2})$.
The mixed method gives the rate
$\mathcal O(\mathtt{ndof}^{-3/2})$, which is optimal for the used
quadratic FEM.
Since the solution in this example is smooth and the discontinuities
of the coefficient match with the initial meshes, uniform refinement
leads to the same rates as adaptive refinement.
In the case of a non-matching initial triangulation, uniform mesh
refinement leads to reduced convergence rates as shown
in Figure~\ref{f:convhistSmooth_nonmatch}, whereas the adaptive
BFS and Taylor-Hood schemes seem to behave optimal.
The initial rectangular mesh is the square subdivided in four
rectangles meeting at $(0.1,0.2)$. The initial triangular mesh
is created by inserting a `criss' diagonal in each of those
four rectangles.
A more challenging example is given below.

\begin{figure}
\centering
\includegraphics[width=.7\textwidth]{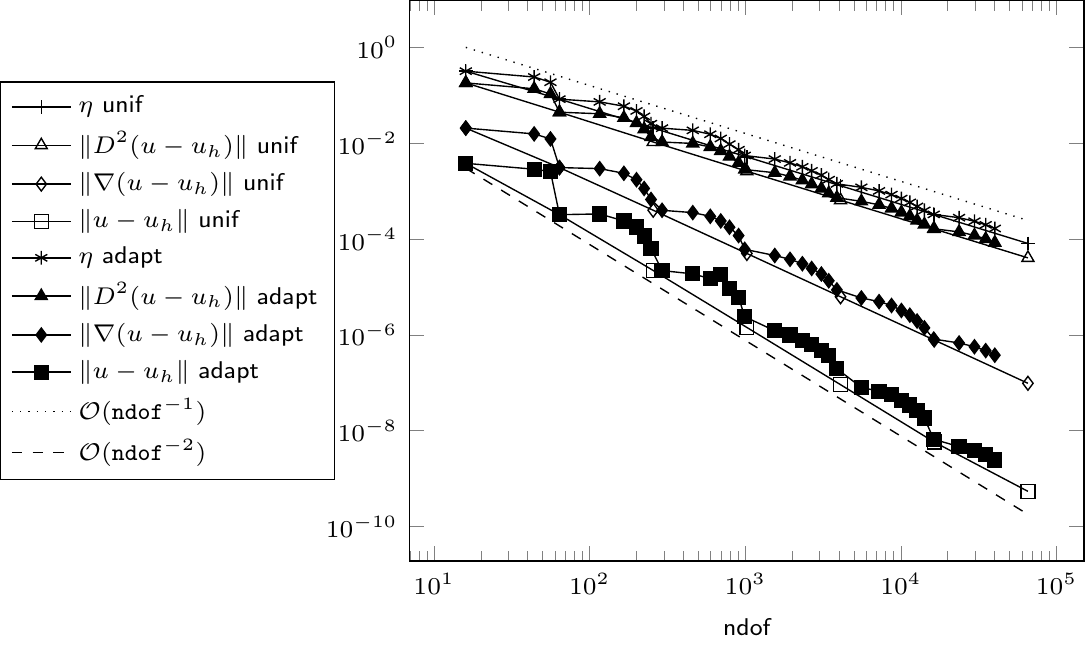}
\caption{Convergence history in the smooth Experiment~1 for the
   BFS finite element.\label{f:convhistBFSsmooth}}
\end{figure}

\begin{figure}
\centering
\includegraphics[width=.7\textwidth]{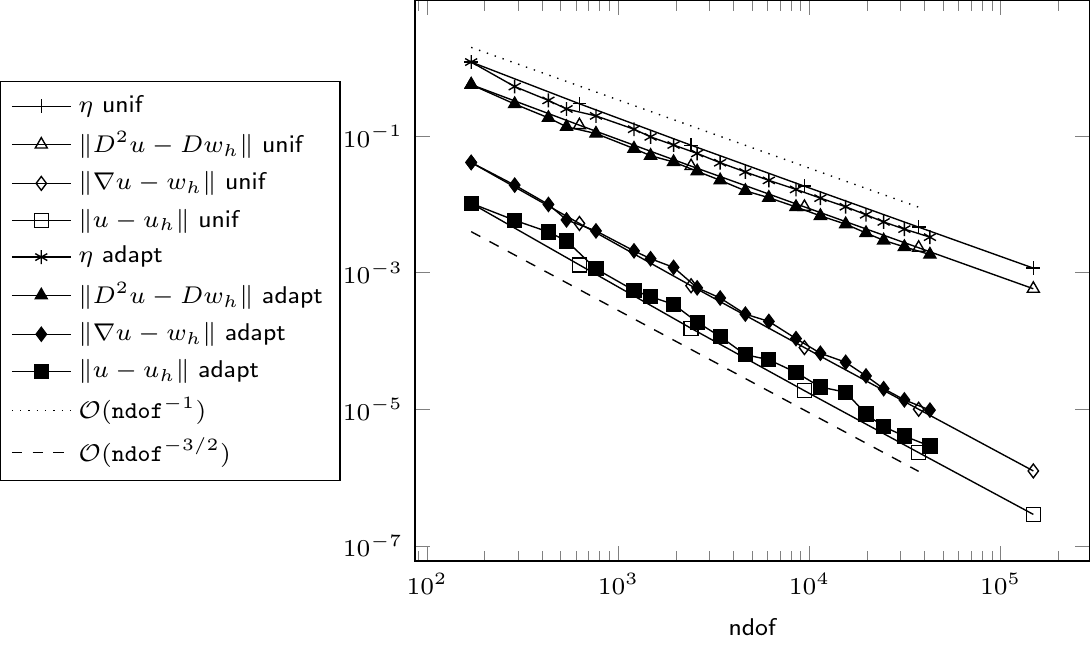}
\caption{Convergence history in the smooth Experiment~1 for the
   Taylor--Hood finite element.\label{f:convhistTHsmooth}}
\end{figure}

\begin{figure}
\centering
\includegraphics[width=.48\textwidth]{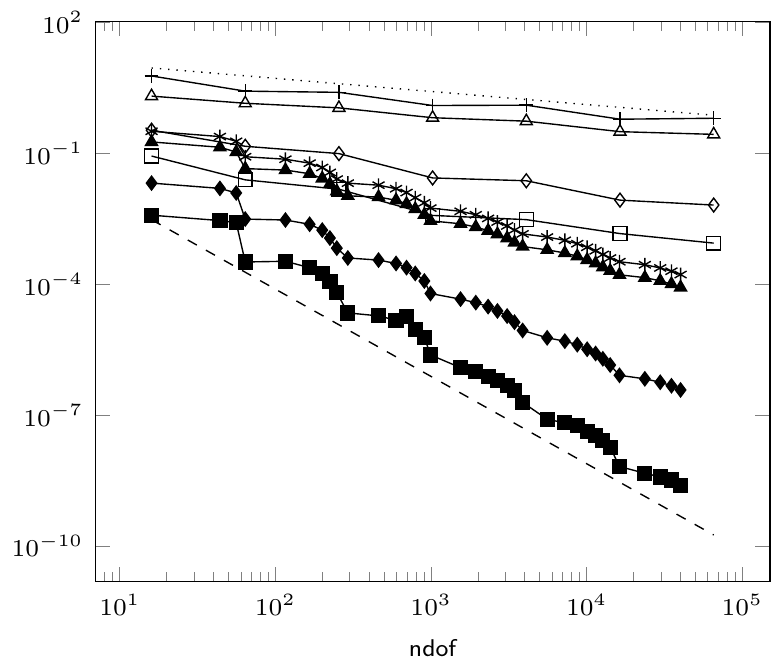}
\includegraphics[width=.48\textwidth]{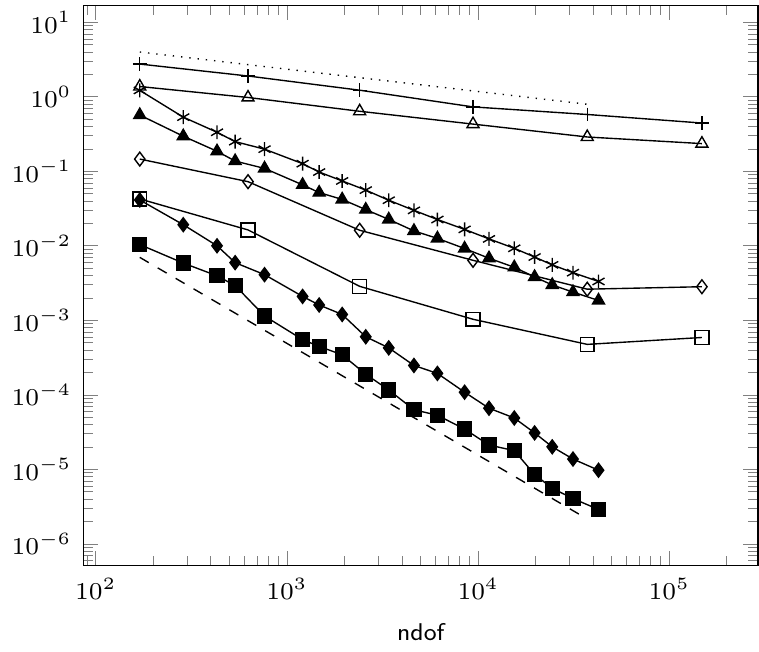}
\caption{Convergence history in the smooth Experiment~1
   with non-matching initial mesh. Left: BFS method, cf.\
   Figure~\ref{f:convhistBFSsmooth} for a legend.
   Right: BFS method, cf.\
   Figure~\ref{f:convhistTHsmooth} for a legend.
   In both plots, the dotted line indicates
   $\mathcal O (\mathtt{ndof}^{-0.3})$ (unlike in
   Figures~\ref{f:convhistBFSsmooth}--\ref{f:convhistTHsmooth}).
   \label{f:convhistSmooth_nonmatch}}
\end{figure}

\subsection{Experiment~2}
The known singular solution reads in polar coordinates as
$$
 u(r,\theta) =\begin{cases}
               r^{5/3} (1 - r)^{5/2} \sin(2\theta/3)^{5/2}
                   &\text{if } 0<r\leq 1 \text{ and } 0<\theta<3\pi/2\\
               0 &\text{else.}
            \end{cases}
$$
The Sobolev smoothness of $u$ near the origin $(0,0)$ is
strictly less than $H^{8/3}$.
Also near the
boundary of the sector $\{(r,\phi):0<r<1, 0<\theta<3\pi/2\}$
the regularity is reduced.
%
The singularities in Experiment~2 lead to the suboptimal convergence
rates of uniform refinement, displayed in the convergence
history of Figure~\ref{f:convhistBFSsingularity} for the BFS FEM
and Figure~\ref{f:convhistTHsingularity} for the Taylor--Hood
element. In both cases, the adaptive method converges at
optimal rate.
The graph of the solution computed with the adaptive
BFS element and the
adaptively generated meshes are displayed 
in Figure~\ref{f:meshessingularity}.
In both cases, the refinement is pronounced in the regions where
the solution is singular: the origin and the curved sector boundary.

\begin{figure}
\centering
\includegraphics[width=.7\textwidth]{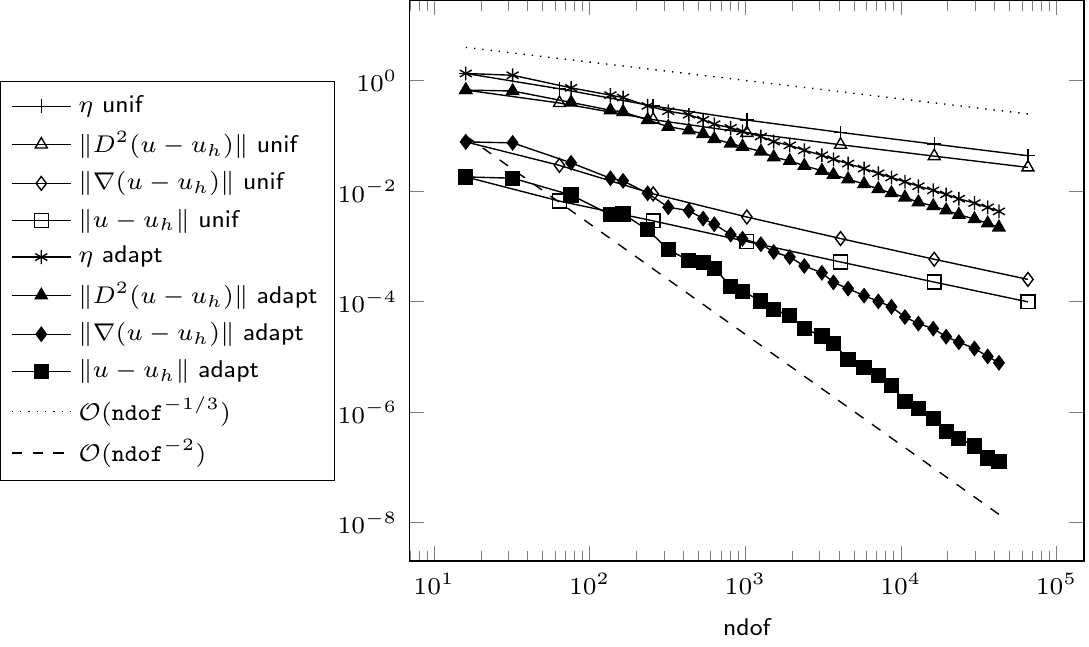}
\caption{Convergence history in the singular Experiment~2 for the
   BFS finite element.\label{f:convhistBFSsingularity}}
\end{figure}

\begin{figure}
\centering
\includegraphics[width=.7\textwidth]{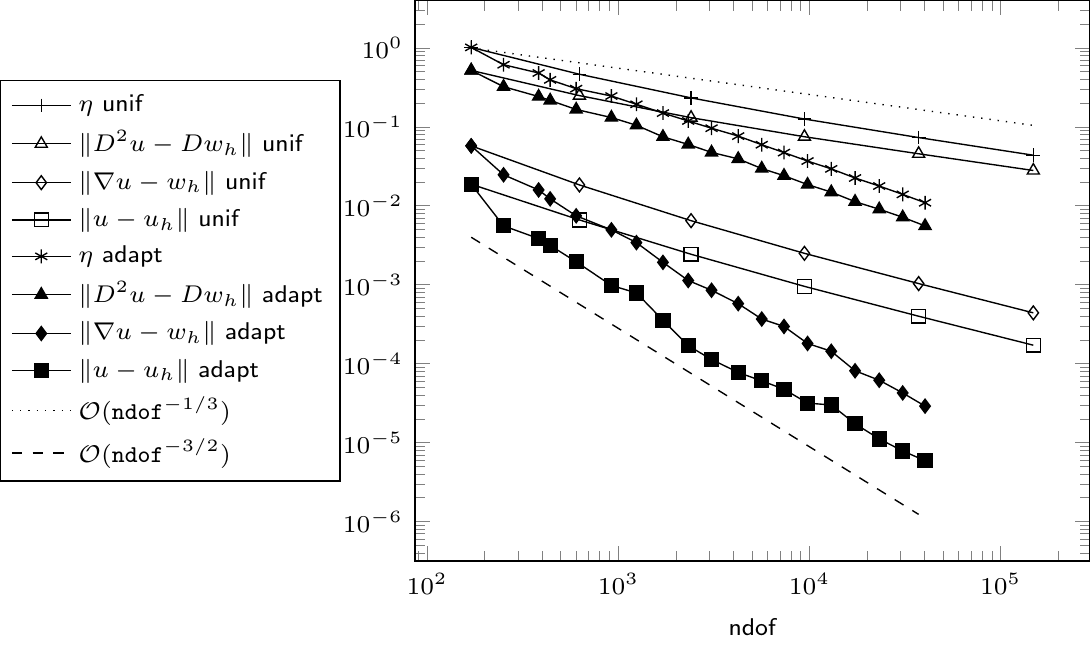}
\caption{Convergence history in the singular Experiment~2 for the
   Taylor--Hood finite element.\label{f:convhistTHsingularity}}
\end{figure}

\begin{figure}
\centering
\includegraphics[width=.33\textwidth]
            {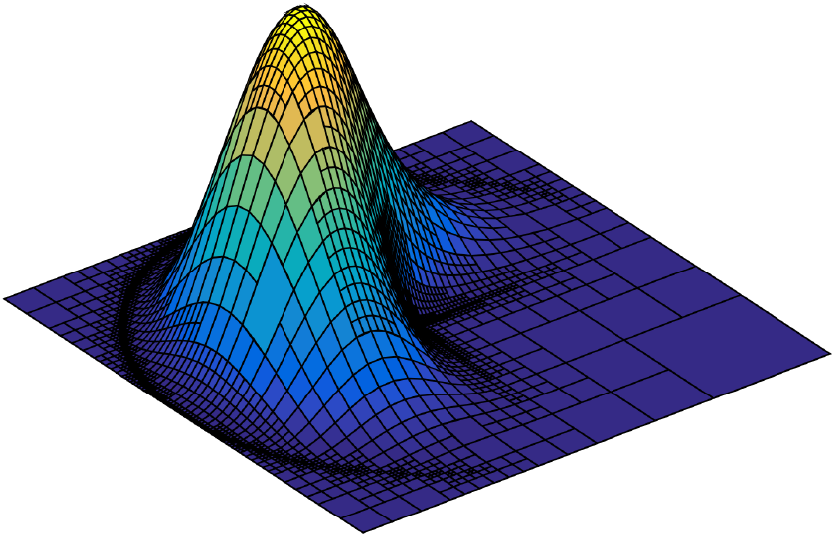}
\hfil
\includegraphics[width=0.23\textwidth]
            {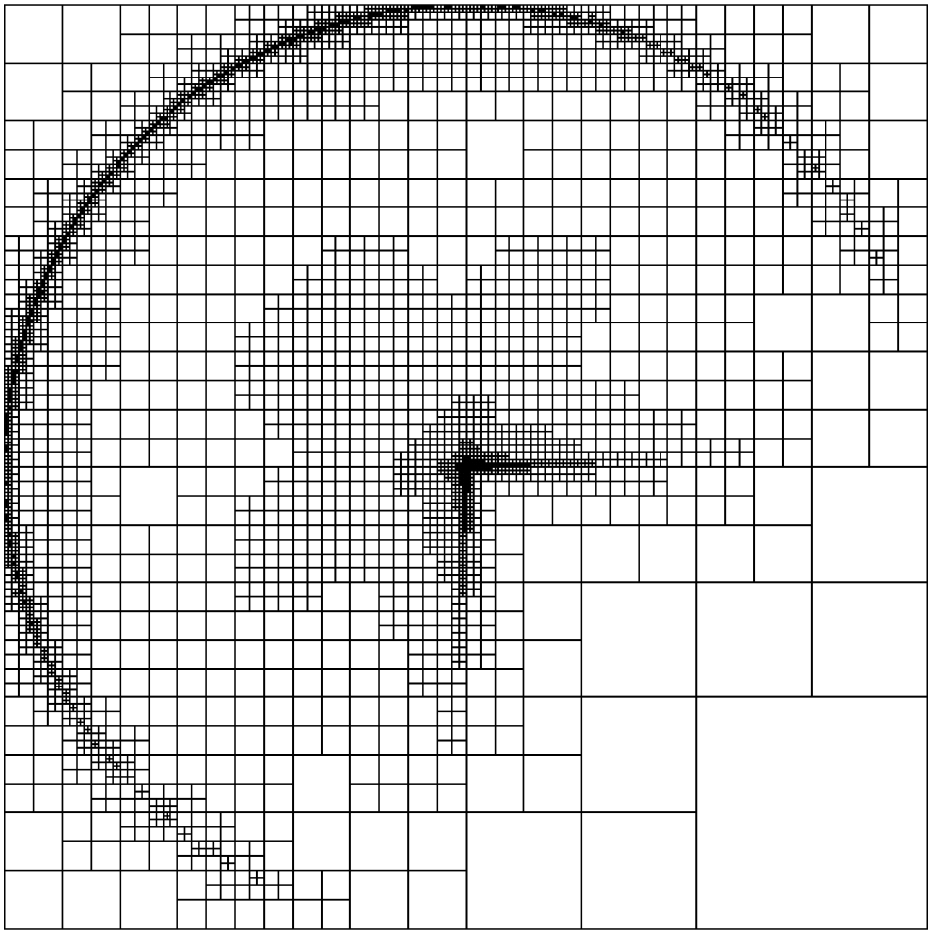}
\hfil
\includegraphics[width=0.23\textwidth]
            {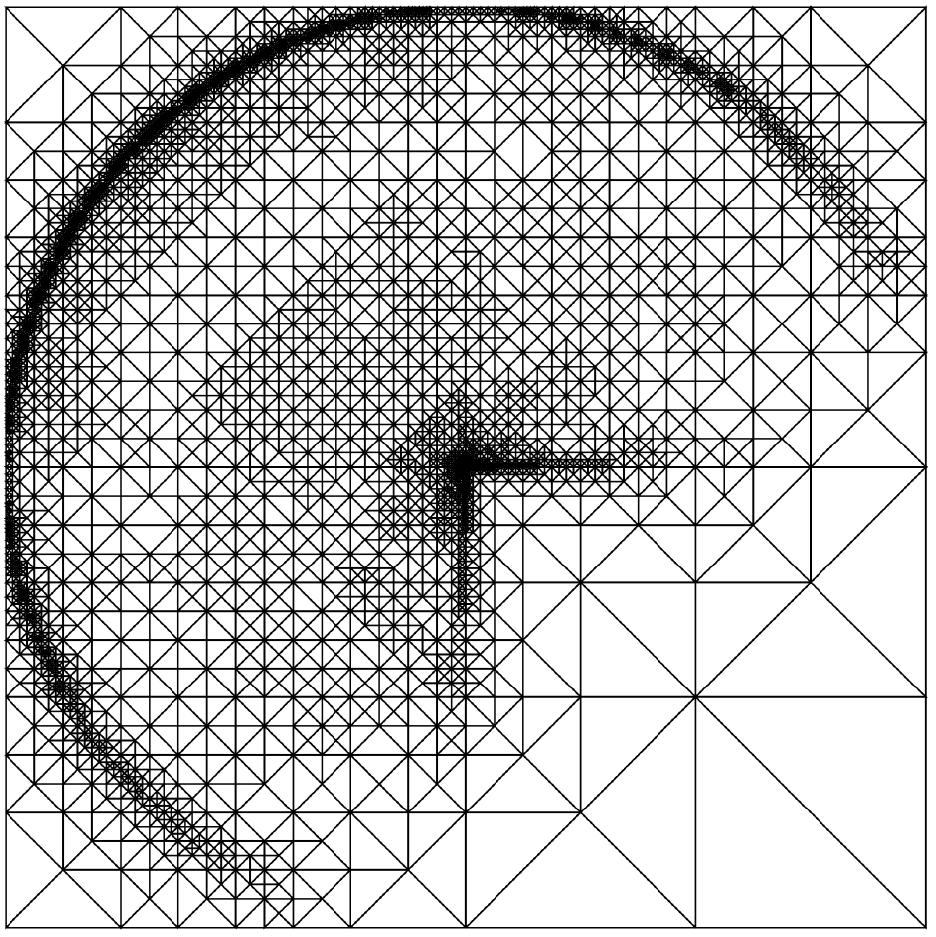}            
\caption{Experiment~2.
Surface plot of the discrete BFS solution (left);
and adaptive meshes.
Middle: BFS, 6\,017 vertices, 23\,584 degrees of freedom, 
$\ell=26$. Right: Taylor--Hood,
4\,518 vertices, 40\,238 degrees of freedom, $\ell=18$.%
\label{f:meshessingularity}}
\end{figure}

\begin{figure}
\centering
\includegraphics[width=.7\textwidth]{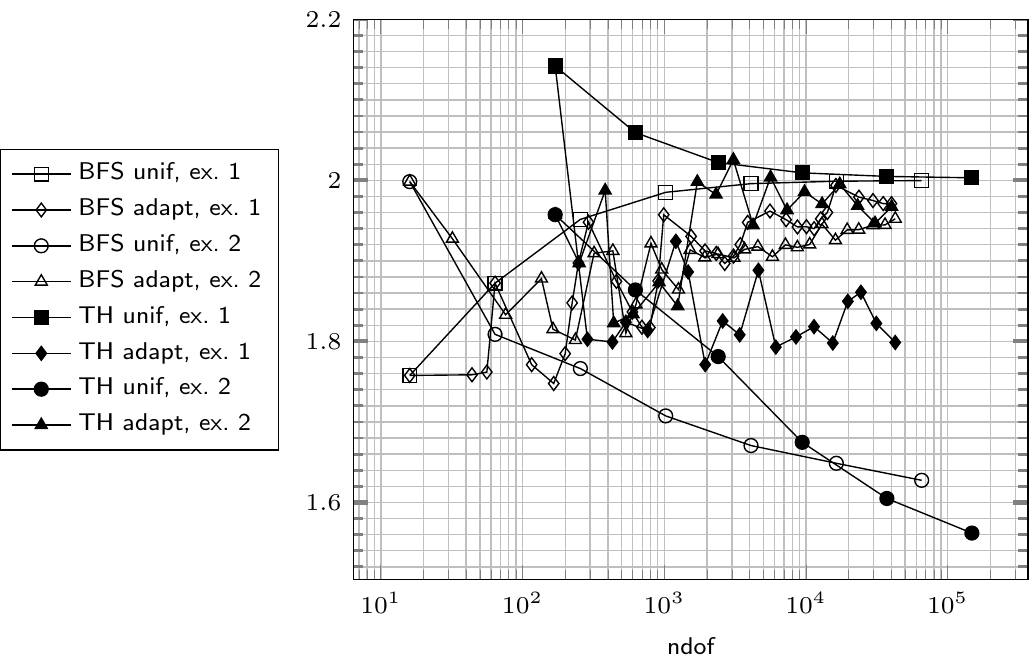}
\caption{Efficiency indices (estimator/error)
for the error estimators in 
Experiments~1--2 (BFS=Bogner--Fox--Schmit, TH=Taylor--Hood)
with matching initial meshes.
\label{f:effind}}
\end{figure}

\subsection{Efficiency indices for Examples~1--2}
The efficiency indices are defined by 
$\eta_{\mathrm{conf}} / \|D^2(u-u_h) \|_{L^2(\Omega)}$
for the conforming discretization and by
$\eta_{\mathrm{mixed}} / \|D^2 u-Dw_h \|_{L^2(\Omega)}$.
Propositions~\ref{p:confanalysis} and
\ref{p:mixedanalysisLS} and the values of $\|A\|_{L^\infty(\Omega)}$
and $c(\gamma,\varepsilon)$,
$c_\lambda^\tau(\gamma,\varepsilon)$,
and $\sigma_\lambda^\tau(\gamma,\varepsilon)$,
predict that the efficiency index
ranges in the interval
$[0.91886,2]$ for the conforming discretization
and in $[0.45943,2.2186]$ for the mixed method.
The efficiency indices for Experiments~1--2 
with matching initial meshes
are shown in 
Figure~\ref{f:effind}.
For the conforming discretization they range from $1.6$ to $2$,
while for the mixed scheme they lie between $1.5$ and $2.2$.

\subsection{Experiment 3}
This is an example with right-hand side
$f = 1$, where the exact solution is unknown.
The used coefficient is $A\circ\varphi$, that is, $A$ is 
concatenated with the nonlinear transform 
$\varphi
(x_1,x_2)
=
(x_1+1/3 \; , \; x_2-1/3 + (x_1+1/3)^{1/3})
$.
The coefficient is not aligned with the initial meshes and has
a sharp discontinuity interface near the point $(-1/3,-1/3+(1/3)^{1/3})$.
Figure~\ref{f:meshesnonmatching} displays the sign pattern of
its off-diagonal entries.
In Experiment~3, the meshes are not aligned with the discontinuous
coefficient. 
The convergence history is shown in
Figure~\ref{f:convhistBFS-THnonmatching} for the BFS method 
and the Taylor--Hood
method.
Since the exact solution is not known, the error estimators 
are plotted.
In both cases, 
uniform refinement leads to the suboptimal convergence
rate of 
$\mathcal O(\mathtt{ndof}^{-0.35})$.
The adaptive methods converge at a better rate. Still, it is 
suboptimal of rate $\mathcal O(\mathtt{ndof}^{-1/2})$.
This may be due to under-integration. Indeed, a Gaussian
quadrature rule is used, which is not accurate for discontinuous
function, 
and the adaptive method behaves like a first-order scheme.
The adaptive meshes from Figure~\ref{f:meshesnonmatching} show
strong refinement towards the jump of the coefficient.

\begin{figure}
\centering
\includegraphics[width=.7\textwidth]{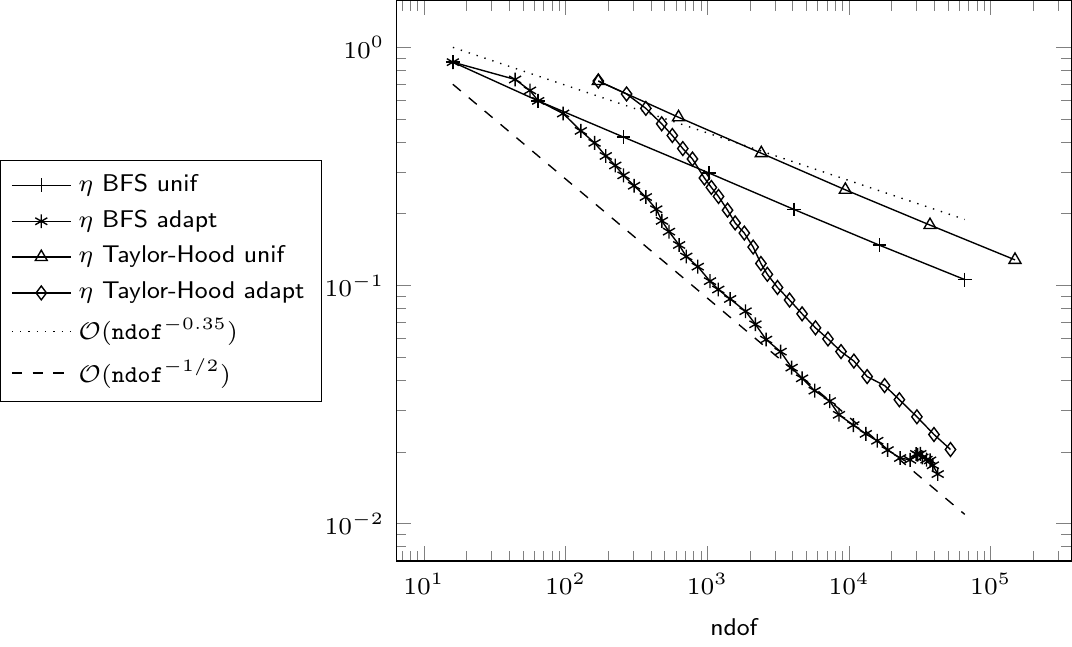}
\caption{%
Convergence history for the BFS and the Taylor--Hood FEM for
Experiment~3 with the nonmatching coefficient.
\label{f:convhistBFS-THnonmatching}}
\end{figure}

\begin{figure}
\centering
\includegraphics[width=0.23\textwidth]{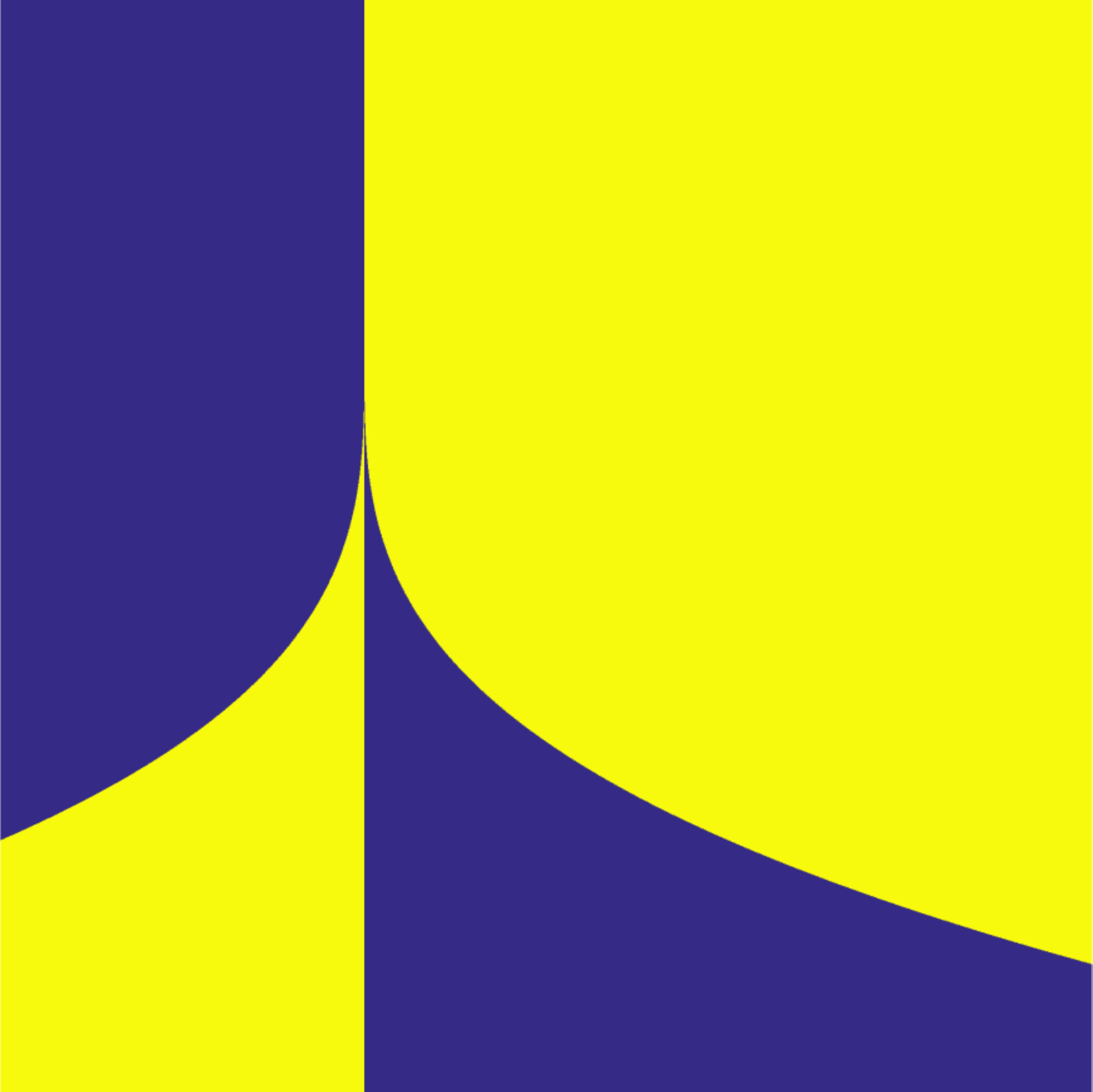}
\hfil
\includegraphics[width=0.23\textwidth]
            {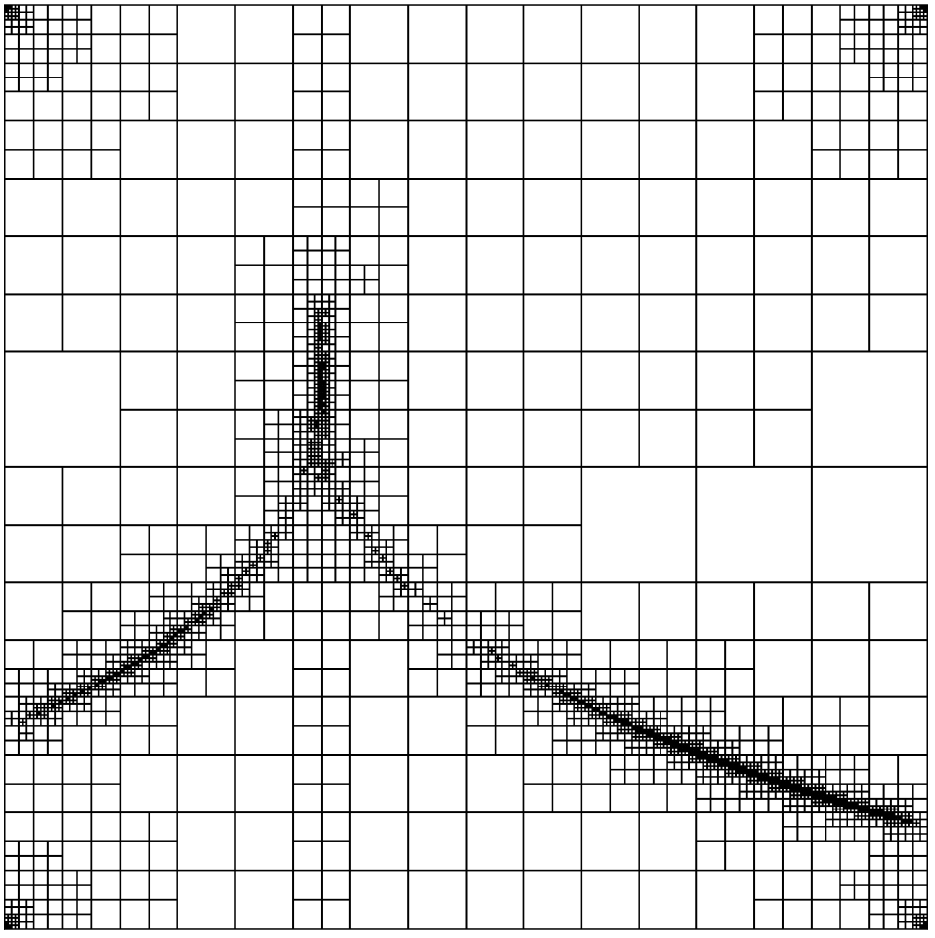}
\hfil
\includegraphics[width=0.23\textwidth]
            {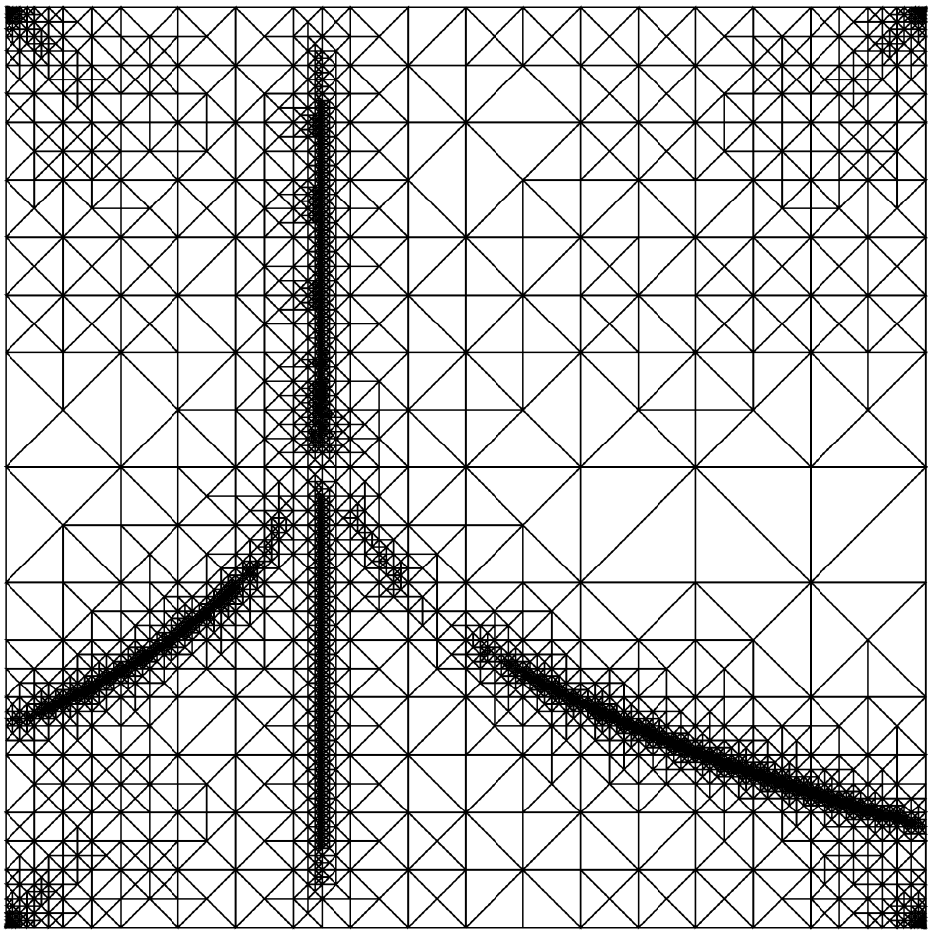}            
\caption{%
Experiment~3 with the nonmatching coefficient.
Sign pattern of the off-diagonal entries of the
coefficient $A$ (left); and
adaptive meshes. Middle: BFS, 4\,808 vertices, 18\,716 degrees of
freedom, $\ell=33$. Right: Taylor--Hood, 5\,848 vertices,
51\,956 degrees of freedom, $\ell=28$.
\label{f:meshesnonmatching}}
\end{figure}

\section{Conclusive remarks} \label{s:conclusion}

The 
variational formulation of \cite{SmearsSueli2013}
as well as the new least-squares formulation of elliptic equations in
nondivergence form can be discretized with 
conforming and, more importantly, mixed finite element
technologies in a direct way. This allows for quasi-optimal error
estimates and a~posteriori error analysis.
The proven convergence of the adaptive algorithm
can be observed in the numerical experiments,
and, as an empirical observation,
appears to be quasi-optimal, provided the
quadrature is accurate enough.
The following remarks conclude this paper.

\paragraph{(a) On the choice of the variational formulation}
The least-squares method is presented as an alternative approach
to the nonsymmetric formulation of \cite{SmearsSueli2013}.
While the symmetry of the discrete problem is certainly a favourable
property, a straightforward generalization to, e.g.,
Hamilton-Jacobi-Bellman equations (as presented in 
\cite{SmearsSueli2014,SmearsSueli2016} for the nonsymmetric
formulation) is less obvious. This is due to the fact that the 
nonlinear operator in that problem does not have sufficient
smoothness properties that would allow an analysis of 
a direct least-squares procedure. 
Alternatively, the least-squares method could be applied to the 
linear problems from a semismooth Newton algorithm.
However, as semismoothness on the operator level does not hold
in general (cf.\ \cite[Remark~1]{SmearsSueli2014}), an analysis
of this method requires further investigation.

\paragraph{(b) Nonconvex domains}
The least-squares formulation may still be meaningful
on nonconvex domains, but the solution
will generally not coincide with that of \eqref{e:pdeL}.

\paragraph{(c) Nonconforming schemes}
Nonconforming finite elements for fourth-order problems
\cite{Ciarlet1978} have the advantage to be much simpler than
their conforming counterparts. Since discrete analogues of
\eqref{e:LaplHess} or \eqref{e:divrotestimate} may not be
satisfied without further stabilization terms, their application
would require further modifications.

\paragraph{(d) Lower-order terms}
Equations in nondivergence form including lower-order terms can
be equally well treated in the proposed framework,
see also \cite{SmearsSueli2014}.
The least-squares formulation
can be derived from the minimization of the functional
$
  \| A:D^2 u + b\cdot\nabla u +c u - f\|_{L^2(\Omega)}
$
for data $b$ and $c>0$.
For the mixed system \eqref{e:saddlepoint} this leads to a
coupling of equations \eqref{e:saddlepoint_a} and
\eqref{e:saddlepoint_b}--\eqref{e:saddlepoint_c}.
The Cordes condition with lower-order terms 
\cite{SmearsSueli2014} reads: there exists $\alpha>0$ such that
\begin{equation*}
 \left(|A|^2+|b|^2/(2\alpha)+(c/\alpha)^2\right)
 \big/
 (\trace A + c/\alpha)^2
\leq 1/(d+\varepsilon) .
\end{equation*}
More details on this Cordes condition are given in \cite{SmearsSueli2014}.

\paragraph{(e) Space dimensions higher than $d=3$}
The main arguments of this work are valid for any space dimension
$d\geq 2$. Also the mixed formulation can be formulated in any dimension,
provided it is posed in the space satisfying the constraint 
$\rot w =0$, which in higher dimensions is understood as
$Dw =(Dw)^*$. 
For the design of a numerical method,
it remains to identify the space $Q$ of multipliers.

\section*{Acknowledgments}
The author thanks Prof.~Ch.~Kreuzer for a helpful
discussion,
and the anonymous referees who helped
to significantly improve the presentation.
The author is funded
by Deutsche Forschungsgemeinschaft
(DFG) through CRC1173.

\providecommand{\bysame}{\leavevmode\hbox to3em{\hrulefill}\thinspace}
\providecommand{\MR}{\relax\ifhmode\unskip\space\fi MR }
\providecommand{\MRhref}[2]{%
  \href{http://www.ams.org/mathscinet-getitem?mr=#1}{#2}
}
\providecommand{\href}[2]{#2}


\begin{thebibliography}{10}

\bibitem{BinevDahmenDeVore2004}
Peter Binev, Wolfgang Dahmen, and Ron DeVore, \emph{Adaptive finite element
  methods with convergence rates}, Numer. Math. \textbf{97} (2004), no.~2,
  219--268.

\bibitem{BoffiBrezziFortin2013}
D.~Boffi, F.~Brezzi, and M.~Fortin, \emph{Mixed finite element methods and
  applications}, Springer Series in Computational Mathematics, vol.~44,
  Springer, Heidelberg, 2013.

\bibitem{Braess2007}
D.~Braess, \emph{Finite elements.\ theory, fast solvers, and applications in
  elasticity theory}, third ed., Cambridge University Press, Cambridge, 2007.

\bibitem{CaffarelliSilvestre2010}
Luis Caffarelli and Luis Silvestre, \emph{Smooth approximations of solutions to
  nonconvex fully nonlinear elliptic equations}, Nonlinear partial differential
  equations and related topics, Amer. Math. Soc. Transl. Ser. 2, vol. 229,
  Amer. Math. Soc., Providence, 2010, pp.~67--85.

\bibitem{Ciarlet1978}
Ph.~G. Ciarlet, \emph{The finite element method for elliptic problems}, Studies
  in Mathematics and its Applications, vol.~4, North-Holland Publishing Co.,
  Amsterdam-New York-Oxford, 1978.

\bibitem{Cordes1956}
Heinz~Otto Cordes, \emph{\"{U}ber die erste {R}andwertaufgabe bei quasilinearen
  {D}ifferentialgleichungen zweiter {O}rdnung in mehr als zwei {V}ariablen},
  Math. Ann. \textbf{131} (1956), 278--312.

\bibitem{CostabelDauge1999}
Martin Costabel and Monique Dauge, \emph{Maxwell and {L}am\'e eigenvalues on
  polyhedra}, Math. Methods Appl. Sci. \textbf{22} (1999), no.~3, 243--258.

\bibitem{Doerfler1996}
Willy D{\"o}rfler, \emph{A convergent adaptive algorithm for {P}oisson's
  equation}, SIAM J. Numer. Anal. \textbf{33} (1996), no.~3, 1106--1124.

\bibitem{FengHenningsNeilan2016}
Xiaobing Feng, Lauren Hennings, and Michael Neilan, \emph{Finite element
  methods for second order linear elliptic partial differential equations in
  non-divergence form}, Math. Comp. (2016), To appear.

\bibitem{FengNeilanSchnake2016}
Xiaobing Feng, Michael Neilan, and Stefan Schnake, \emph{Interior penalty
  discontinuous {G}alerkin methods for second order linear non-divergence form
  elliptic {PDE}s}, arXiv e-prints \textbf{1605.04364} (2016).

\bibitem{Gallistl2016}
D.~Gallistl, \emph{Stable splitting of polyharmonic operators by generalized
  {S}tokes systems}, Math. Comp. (2016), To appear.

\bibitem{GiraultRaviart1986}
Vivette Girault and Pierre-Arnaud Raviart, \emph{Finite element methods for
  {N}avier-{S}tokes equations. theory and algorithms}, Springer-Verlag, Berlin,
  1986.

\bibitem{Grisvard1985}
P.~Grisvard, \emph{Elliptic problems in nonsmooth domains}, Monographs and
  Studies in Mathematics, vol.~24, Pitman (Advanced Publishing Program),
  Boston, MA, 1985.

\bibitem{LakkisPryer2011}
Omar Lakkis and Tristan Pryer, \emph{A finite element method for second order
  nonvariational elliptic problems}, SIAM J. Sci. Comput. \textbf{33} (2011),
  no.~2, 786--801.

\bibitem{LouMcIntosh2005}
Z.~Lou and A.~McIntosh, \emph{Hardy space of exact forms on
  {${\mathbb{R}}^N$}}, Trans. Amer. Math. Soc. \textbf{357} (2005), no.~4,
  1469--1496.

\bibitem{MaugeriPalagachevSoftova2000}
Antonino Maugeri, Dian~K. Palagachev, and Lubomira~G. Softova, \emph{Elliptic
  and parabolic equations with discontinuous coefficients}, Wiley-VCH Verlag
  Berlin GmbH, Berlin, 2000.

\bibitem{MorinSiebertVeeser2008}
Pedro Morin, Kunibert~G. Siebert, and Andreas Veeser, \emph{A basic convergence
  result for conforming adaptive finite elements}, Math. Models Methods Appl.
  Sci. \textbf{18} (2008), no.~5, 707--737.

\bibitem{NochettoSiebertVeeser2009}
Ricardo~H. Nochetto, Kunibert~G. Siebert, and Andreas Veeser, \emph{Theory of
  adaptive finite element methods: an introduction}, Multiscale, nonlinear and
  adaptive approximation, Springer, Berlin, 2009, pp.~409--542.

\bibitem{NochettoZhang2014}
Ricardo~H. {Nochetto} and Wujun Zhang, \emph{Discrete {ABP} estimate and
  convergence rates for linear elliptic equations in non-divergence form},
  arXiv e-prints \textbf{1411.6036} (2014), Preprint.

\bibitem{Safonov1999}
Mikhail~V. Safonov, \emph{Nonuniqueness for second-order elliptic equations
  with measurable coefficients}, SIAM J. Math. Anal. \textbf{30} (1999), no.~4,
  879--895.

\bibitem{SmearsSueli2013}
Iain Smears and Endre S{\"u}li, \emph{Discontinuous {G}alerkin finite element
  approximation of nondivergence form elliptic equations with {C}ord\`es
  coefficients}, SIAM J. Numer. Anal. \textbf{51} (2013), no.~4, 2088--2106.

\bibitem{SmearsSueli2014}
\bysame, \emph{Discontinuous {G}alerkin finite element approximation of
  {H}amilton-{J}acobi-{B}ellman equations with {C}ordes coefficients}, SIAM J.
  Numer. Anal. \textbf{52} (2014), no.~2, 993--1016.

\bibitem{SmearsSueli2016}
\bysame, \emph{Discontinuous {G}alerkin finite element methods for
  time-dependent {H}amilton-{J}acobi-{B}ellman equations with {C}ordes
  coefficients}, Numer. Math. \textbf{133} (2016), no.~1, 141--176.

\bibitem{Verfuerth2013}
R{\"u}diger Verf{\"u}rth, \emph{A posteriori error estimation techniques for
  finite element methods}, Numerical Mathematics and Scientific Computation,
  Oxford University Press, Oxford, 2013.

\end{thebibliography}
\end{document}